 \newtheorem{thm}{Theorem}[section]
 \newtheorem{cor}[thm]{Corollary}
 \newtheorem{lem}[thm]{Lemma}
 \theoremstyle{definition}
 \newtheorem{defn}[thm]{Definition}
 \theoremstyle{remark}
 \newtheorem{rem}[thm]{Remark}
 \newtheorem{qu}{Question}
 \numberwithin{equation}{section}
\newcommand{\N}{\mathbb{N}}
\newcommand{\norm}[1]{\left\| #1 \right\|}
\newcommand{\TT}[4]{ T^{#1,#2}_{#3,#4}}
\newcommand{\TTn}[5]{ T\left(#5\right)^{#1,#2}_{#3,#4}}
\begin{document}

%
%
%
%
%
%
%
%
%

\title[The ball-covering property of non-commutative spaces of operators]
 {The ball-covering property of non-commutative spaces of operators on Banach spaces}

\author[Qiyao Bao]{Qiyao Bao}

\address{%
School of Mathematical Sciences and LPMC\\
Nankai University\\
Tianjin\\
China}

\email{qybao@mail.nankai.edu.cn}

\thanks{This work was partially supported by the National Natural Science Foundation of China (No. 11971348, 12071230 and 12471131).}
\author{Rui Liu}
\address{School of Mathematical Sciences and LPMC\br
Nankai University\br
Tianjin\br
China}
\email{ruiliu@nankai.edu.cn}

\author{Jie Shen}
\address{School of Mathematical Sciences and LPMC\br
Nankai University\br
Tianjin\br
China}
\email{1710064@mail.nankai.edu.cn}
\subjclass{Primary 46B20; Secondary 46B15, 46B28}

\keywords{Ball-covering property, Non-commutative spaces of operators, Unconditional bases, Quotient Banach algebras, Calkin algebra}


\begin{abstract}
A Banach space is said to have the ball-covering property (BCP) if its unit sphere can be covered by countably many closed or open balls off the origin. Let $X$ be a Banach space with a shrinking $1$-unconditional basis. In this paper, by constructing an equivalent norm on $B(X)$, we prove that the quotient Banach algebra $B(X)/K(X)$ fails the BCP. In particular, the result implies that the Calkin algebra $B(H)/ K(H)$,  $B(\ell^p)/K(\ell^p)$ ($1 \leq p <\infty$) and  $B(c_0)/K(c_0)$ all fail the BCP. We also show that $B(L^p[0,1])$ has the uniform ball-covering property (UBCP) for $3/2< p < 3$.
\end{abstract}

\maketitle
\section{Introduction}
The ball-covering property was firstly introduced by Cheng \cite{C2006} and was studied widely by many authors from different perspectives. Almost all properties of Banach spaces can be considered as corresponding properties on the unit sphere of the space, including separability, completeness, reflexivity, smoothness, Radon-Nikodym property \cite{CWZ2011}, uniform convexity, uniform non-squareness \cite{CLL2010}, strict convexity and dentability \cite{SC2015,SC2018}, and universal finite representability and B-convexity \cite{Z2012}. The notion of the ball-covering property plays an important role in the study of geometric and topological properties of Banach spaces
\cite{ AG2023,CKZ2020,CLL2023,FZ32009,FR2016,S2021}. The definition of the ball-covering property is as follows.

\begin{defn}
Let $X$ be a normed space and let $S_{X}$ denote its unit sphere. If there exists a sequence of open balls $(B(x_{n},r_{n}))_{n=1}^{\infty}$ in $X$ such that
$$S_{X}\subseteq\bigcup_{n=1}^{\infty}B(x_{n},r_{n})$$ and $0\notin B(x_{n},r_{n})$, then we say $X$ has the ball-covering property (BCP, in short).
\end{defn}
The centers of the balls are called the {\it BCP points} of $X$. If $X$ has the BCP and the radii of $(B(x_{n},r_{n}))_{n=1}^{\infty}$ are bounded, then $X$ is said to have the {\it strong ball-covering property (SBCP)} \cite{LZ2021}. Moreover, if $X$ has the SBCP, and there exists $r>0$ such that $B(x_{n},r_{n}) \cap B(0,r) = \emptyset$ for all $n\in\mathbb{N_+}$, then $X$ is said to have the {\it uniform ball-covering property (UBCP)} \cite{LZ2021}.

The definition of the BCP shows that all separable normed spaces have the BCP, but the converse is not true \cite{C2006,CCL2008}. In \cite{C2006}, Cheng proved that the non-separable space $\ell^{\infty}$ has the BCP. In \cite{CCL2008}, Cheng et al. showed that $\ell^{\infty}$ can be renormed such that the renormed space fails the BCP, which implies that the BCP is not heritable by its closed subspaces and is not preserved under linear isomorphisms and quotient mappings. Therefore $\ell^{\infty}/ c_{0}$ fails the BCP. Recently, Liu et al. \cite{LLLZ2022} investigated the BCP from commutative function space to non-commutative spaces of operators. They gave a topological characterization of the BCP and showed that the BCP is not hereditary for $1$-complemented subspaces. They proved that the continuous function space $C_{0}(\Omega)$ has the BCP if and only if $\Omega$ has a countable $\pi$-basis where $\Omega$ is a locally compact Hausdorff space. Moreover, they showed that $B(c_{0})$, $B(\ell^{1})$ and every subspace containing finite rank operators in $B(\ell^{p})$ for $1 <p <\infty$ all have the BCP. They also presented some necessary conditions for the bounded linear operators space $B(X,Y)$ to have the BCP. These results established a non-commutative version of Cheng’s result.

Let $H$ be an infinite-dimensional Hilbert space, denote all the bounded linear operators from $H$ to $H$ by $B(H)$. Let $K(H)$ be the ideal of compact operators in $B(H)$, the quotient algebra $B(H)/ K(H)$ is called the Calkin algebra \cite{F2019}. The Calkin algebra is the non-commutative analog of $\ell^{\infty}/ c_{0}$. The following natural question about the Calkin algebra is still open.

\begin{qu}\label{question1}
Does the Calkin algebra $B(H)/ K(H)$ have the BCP?
\end{qu}

Let $X$ be a Banach space, denote all the bounded linear operators from $X$ to $X$ by $B(X)$. Let $K(X)$ be the ideal of compact operators in $B(X)$, a more general question is as follows.

\begin{qu}\label{question2}
Does the quotient Banach algebra $B(X)/ K(X)$ have the BCP?
\end{qu}

In this paper, we give a negative answer to Question \ref{question1} and give a negative answer to Question \ref{question2} when $X$ is a Banach space with a shrinking $1$-unconditional basis by constructing an equivalent norm on $B(X)$.

Let $X$ be a Banach space with a shrinking $1$-unconditinal basis, we fix a real number $\alpha\in [0,1]$, and define a new norm by  $$\|\cdot\|_{\alpha}=\alpha\|\cdot\|_{B(X)}+(1-\alpha)\|\cdot\|_{B(X)/ K(X)}.$$
Then our first main result which characterizes the BCP of the renormed space  $X_\alpha=(B(X),\|\cdot\|_{\alpha})$ is as follows.

\begin{thm}
  Let $X$ be a Banach space with a shrinking $1$-unconditional basis  $(e_n)_{n=1}^\infty$. Then the renormed space $X_{\alpha}=(B(X), \|\cdot\|_{\alpha})$ has the BCP if and only if $\alpha > 1/2$.
\end{thm}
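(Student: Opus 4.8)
The plan is to treat the two implications separately, with the "only if" direction being the substantive one since it packages the failure of the BCP for $B(X)/K(X)$ itself (the $\alpha=0$ case). First, for the "if" direction, suppose $\alpha > 1/2$. The key observation is that on $B(X)$ we have $\|\cdot\|_{B(X)/K(X)} \le \|\cdot\|_{B(X)}$, so $\alpha\|\cdot\|_{B(X)} \le \|\cdot\|_\alpha \le \|\cdot\|_{B(X)}$, i.e.\ $\|\cdot\|_\alpha$ is equivalent to the operator norm. But the BCP is \emph{not} an isomorphic invariant in general, so equivalence alone is not enough; instead I would exploit that $B(X)$, for $X$ with an unconditional basis, contains a large supply of ``diagonal-type'' finite-rank operators whose $\alpha$-norm is easy to compute, and use these together with the fact that $\alpha > 1/2$ forces the operator-norm ball and the quotient-norm contribution to be comparable in a one-sided way that lets the standard construction (covering $S_{B(X)}$ by balls centered at suitably normalized finite-rank operators, as in \cite{LLLZ2022} for $B(\ell^p)$) survive the renorming. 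Concretely I expect to show $X_\alpha$ has the BCP by producing explicit BCP points of the form $\lambda P_n$ where $P_n$ is a basis projection, checking $0 \notin B(\lambda P_n, r_n)$ reduces to the inequality $\alpha > 1/2$.

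For the "only if" direction, assume $\alpha \le 1/2$; I must show $X_\alpha$ fails the BCP. The engine here is an equivalent norm on $B(X)$, announced in the abstract, that I would construct so that the quotient seminorm $\|T\|_{B(X)/K(X)}$ can be computed (or bounded below) in terms of the ``asymptotic diagonal'' behavior of $T$ against the $1$-unconditional basis: roughly, $\|T\|_{B(X)/K(X)} = \limsup_n \|T|_{\overline{\mathrm{span}}(e_k)_{k\ge n}}\|$ up to the equivalent norm. Using $1$-unconditionality, I would associate to each $T$ a bounded sequence of ``tail norms'' and show that for $\alpha \le 1/2$ the $\alpha$-norm of $T$ on the relevant tail subspaces dominates $\tfrac12\big(\|T\|_{B(X)} + \|T\|_{B(X)/K(X)}\big)$ in a way that mimics the $\ell^\infty$ renorming of Cheng--Cheng--Liu \cite{CCL2008}. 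The crux is a $w^*$-type or weak-compactness obstruction: I would show that any countable family of balls off the origin must miss some point of $S_{X_\alpha}$ by exhibiting, for each finite subfamily, a norm-one operator ``far out along the basis'' that is separated from all the listed centers --- here $1$-unconditionality lets me shift an operator arbitrarily far along $(e_n)$ without changing its norms, producing the needed non-separable obstruction. This is precisely where $\alpha = 1/2$ is the threshold: when $\alpha > 1/2$ the operator-norm term strictly dominates and the shifting argument loses its teeth.

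The main obstacle, and the step I would spend the most care on, is the explicit construction of the equivalent norm on $B(X)$ and the verification that with respect to it the quotient norm $\|\cdot\|_{B(X)/K(X)}$ admits the clean ``tail/diagonal'' description above; without such a description the failure of the BCP for $\alpha \le 1/2$ is hard to pin down, because one cannot directly transfer Cheng's $\ell^\infty/c_0$ argument to an arbitrary non-commutative operator ideal. A secondary difficulty is making the shifting/translation argument rigorous: one needs that the right shift along $(e_n)$ (or a conjugation by basis projections) is an isometry for the new norm on the relevant subspaces and that it sends norm-one operators to norm-one operators whose images in the quotient remain norm-one. Once these two technical pillars are in place, the dichotomy at $\alpha = 1/2$ falls out of comparing $\alpha\|T\|_{B(X)} + (1-\alpha)\|T\|_{B(X)/K(X)}$ on diagonal operators, where both terms are computable.
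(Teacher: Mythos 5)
Your high-level architecture matches the paper's (finite-rank centers for $\alpha>1/2$; an ``outside point'' built from the asymptotic behaviour of the centers for $\alpha\le 1/2$, with the threshold coming from trading $-\alpha$ against $+(1-\alpha)$), but both halves as sketched have genuine gaps. In the ``if'' direction, centers of the form $\lambda P_n$ with $P_n$ a basis projection cannot work: the family is countable but carries no information about a generic $T$ (e.g.\ $\|{-id_X}-\lambda P_n\|\ge 1+\lambda$ for all $n$, so $-id_X$ is missed by every ball $B(\lambda P_n,r)$ with $r\le\lambda$). The paper's centers are $2\sum_{i=1}^{K}f_i\otimes e_i/\|\sum_{i=1}^{K}f_i\otimes e_i\|_{B(X)}$ with the $f_i$ running over a countable dense subset of the unit ball of $X^*$; this is exactly where the shrinking hypothesis enters (it gives separability of $X^*$), a point your proposal never uses. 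The mechanism is then that such a center is finite rank, hence has $\alpha$-norm $2\alpha>1$, while $1$-unconditionality bounds the distance from any norm-one $T$ to a suitable center by $1+\varepsilon<2\alpha$; your inequality ``$0\notin B(\lambda P_n,r_n)$ reduces to $\alpha>1/2$'' is the right intuition attached to the wrong centers.

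In the ``only if'' direction there are two problems. First, producing, \emph{for each finite subfamily}, a norm-one operator separated from those centers does not refute the BCP; you need a single $T_0$ missed by all countably many balls, and no compactness is available to pass from finite subfamilies to the whole family. The paper achieves this with a genuine diagonal construction: using a surjection $\pi:\N_+\to\N_+$ hitting every value infinitely often, it extracts from each center $T(n)$ disjointly supported finite blocks $P_{u,v}T(n)P_{r,s}$ whose norms realize $\|T(n)\|_{B(X)/K(X)}$ (this is the content of Lemma~\ref{lem 2}), and sets $T_0=-\sum_k(\text{normalized block of }T(\pi(k)))$; disjoint supports plus $1$-unconditionality give $\|T_0\|_{B(X)}=\|T_0\|_{B(X)/K(X)}=1$, and recurrence of each $n$ gives $\|T_0-T(n)\|_{B(X)/K(X)}\ge 1+\|T(n)\|_{B(X)/K(X)}$, whence $\|T_0-T(n)\|_\alpha\ge\|T(n)\|_\alpha+1-2\alpha$. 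Second, your proposed substitute for this --- shifting a fixed operator far out along the basis --- is not justified: for a general space with a $1$-unconditional basis the shift $e_n\mapsto e_{n+1}$ need not even be bounded, so it cannot be used as a norm-preserving translation. The block-extraction route avoids shifts entirely, and I would regard Lemma~\ref{lem 2} (approximating the quotient norm by norms of finite rectangular compressions $P_{u,v}TP_{r,s}$, not merely by tail restrictions) as the missing technical pillar in your plan.
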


The BCP is a geometric property which has deep connection with the weak star topology for dual spaces. Cheng et al. \cite{CKWZ2010} proved that the BCP does not pass to subspaces. For example, $\ell^{1}[0,1]$ is a subspace of $\ell^{\infty}$ which does not have the BCP. This shows that the $w^{\ast}$-separability of the unit sphere of the dual space $X^{\ast}$ does not imply the BCP of $X$. Cheng \cite{C2006} showed that if $X$ is a Gateaux differentiability space, then $X$ has the BCP if and only if its dual $X^{\ast}$ is $w^{\ast}$-separable, which implies that the BCP is topological invariant among the Gateaux differentiability space. Shang and Cui \cite{SC2013} proved that if a separable space $X$ has the Radon-Nikodym property, then $X^{\ast}$ has the BCP. Fonf and Zanco \cite{FZ2004,FZ12009,FZ22009,FZ32009} investigated the locally finite coverings of the Banach spaces and characterized the relationship between the separability of the dual space and the BCP of $X$. In fact, the BCP only implies the $w^{\ast}$-separability of the dual space \cite{C2006} and every Banach space with a $w^{\ast}$-separable dual can be $(1+\varepsilon)$-renormed to have the SBCP for any $\varepsilon>0$ \cite{CSZ2009,FZ32009}. Luo et al. \cite{LLW2017} showed that for $1 \leq p \leq\infty$, the product space $(X \times Y, \|\cdot\|_{p})$ has the BCP if and only if $X$ and $Y$ have the BCP. Luo and Zheng \cite{LZ2020} proved that for a sequence of normed spaces $\{X_{k}\}$, the direct sum space $(\sum\oplus X_{k})_{\ell^{\infty}}$ has the BCP if and only if every normed space $X_{k}$ has the BCP. They also showed that the dense subspaces preserve the BCP. These results display the stability of the BCP.

In \cite{LZ2020}, Luo and Zheng proved that $L^{\infty}(0,1)$ fails the BCP and if $(\Omega, \Sigma, \mu)$ is a separable measure space, then the space of Bochner integrable functions $L^{p}(\mu, X)$ has the BCP if and only if $X$ has the BCP. 
They also showed that for a separable Lorentz sequence space $E = d(w, p)$, $1 \leq p< \infty$ (or a separable Orlicz sequence space with the $\triangle_{2}$-condition) and a sequence of normed spaces, the space $(\sum\oplus X_{k})_{E}$ has the BCP if and only if all $X_{k}$ have the BCP. In \cite{LZ2021}, Luo and Zheng proved that the SBCP and the UBCP for a Banach space $X$ can be passed to $\ell^{p}(X)$ and $L^{p}([0, 1], X)$ for $1 \leq p \leq \infty$. They showed that $L^{p}([0, 1], X)$ has the BCP if and only if $X$ has the BCP. They also proved that if $E$ is a Banach space with an $1$-unconditional basis $(e_{n})$, then the Banach space $X$ has the UBCP if and only if $E(X)$ has the UBCP, where $E(X)$ is the Banach space of sequences $(x_{n})\subseteq X$ with $\sum_{n}\|x_{n}\|e_{n}$ converging in $E$ and $\|(x_{n})\| =\|\sum_{n}\|x_{n}\|e_{n}\|$. Recently, Huang et al. \cite{HKL2023} characterized non-commutative symmetric spaces having the BCP, which provides a number of new examples of non-separable (commutative and non-commutative) Banach spaces having the BCP. They also showed that a von Neumann algebra has the BCP (indeed, the UBCP) if and only if it is atomic and can be represented on a separable Hilbert space. In \cite{LLLZ2022}, Liu et al. proved that $B(L^{1}[0, 1])$ fails the BCP. However, the following question is still open.

\begin{qu}\label{question3}
Does $B(L^{p}[0, 1])$ have the BCP for $1 <p <\infty$?
\end{qu}

Another main result of this paper answers Question \ref{question3} partially.
\begin{thm}
  Let $3/2 < p < 3$, then $B(L^p[0,1])$ has the UBCP.
\end{thm}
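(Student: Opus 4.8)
The plan is to build by hand a countable family of open balls in $B(L^p[0,1])$ with uniformly bounded radii, covering the unit sphere and staying a fixed positive distance from the origin; this extends the argument that yields the BCP of $B(\ell^p)$, $1<p<\infty$, in \cite{LLLZ2022} from the atomic to the non-atomic case. I work throughout with the Haar system $(h_n)_{n=1}^{\infty}$ of $L^p[0,1]$: it is a monotone Schauder basis, it is unconditional with constant $\lambda_p=\max\{p,p'\}-1$, and, $L^p[0,1]$ being reflexive for $1<p<\infty$, it is shrinking. The hypothesis $3/2<p<3$ is \emph{exactly} the inequality $\lambda_p<2$, and it is this bound that makes the covering balls avoid the origin. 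Write $P_N$ for the basis projection onto $\mathrm{span}\{h_1,\dots,h_N\}$ (so $\|P_N\|=1$) and $Q_N=I-P_N$ (so $\|Q_N\|\le\lambda_p$).

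The first step is \emph{localisation}: each $T\in S_{B(L^p[0,1])}$ almost attains its norm on a finite Haar block. Fixing a small $\varepsilon>0$ once and for all, choosing (depending on $T$) a norm-one $x_0$ with $\|Tx_0\|_p>1-\varepsilon$ and then taking $N$ large, the monotonicity of $(h_n)$ and the continuity of $T$ produce a norm-one $u\in\mathrm{span}\{h_1,\dots,h_N\}$ and an $M\ge N$ with $\|P_MTu\|_p>1-\varepsilon$, hence $1-\varepsilon<\|P_MTP_N\|\le\|T\|=1$. Since each block space $\mathcal S_{N,M}=B(\mathrm{span}\{h_1,\dots,h_N\},\mathrm{span}\{h_1,\dots,h_M\})$ is finite dimensional, I may fix a countable set $\mathcal D$ that is dense, for every pair $N\le M$, in the unit ball of $\mathcal S_{N,M}$. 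The centres of the covering balls are manufactured from the operators $\widehat S:=J_MSP_N$ on $L^p[0,1]$ ($J_M$ the block inclusion) as $S$ runs over $\mathcal D$, so that $\widehat S$ is finite rank with $\|\widehat S\|=\|S\|$; this is a countable family.

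The decisive point is an estimate of the form $\|T-C_S\|_{B(L^p[0,1])}<\rho$ for a centre $C_S$ built from $S$ and a radius $\rho$ strictly below $\|C_S\|$, where $S\in\mathcal D$ is the block model of $T$ obtained by localisation; such a ball then misses the fixed neighbourhood $B(0,\|C_S\|-\rho)$ of the origin. Decomposing a test vector $x=u+v$ with $u=P_Nx$, $v=Q_Nx$ and using $S\approx P_MTP_N$, one is reduced to controlling $\|Q_MTu+Tv\|_p$ (plus lower-order terms). The crude triangle bound here is of size $\|u\|_p+\|v\|_p\le 1+\lambda_p$, which is at least $2$ and is useless for keeping the ball off the origin. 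The remedy is to test \emph{simultaneously} against $u+v$ and $u-v$ (both admissible up to the factor $\lambda_p$): sign-averaging recombines $Q_MTu\pm Tv$ into a quantity governed by the $\lambda_p$-unconditionality of $(h_n)$, and this, together with the rigidity forced by $T$ almost attaining its norm on the front block — here the uniform convexity and uniform smoothness of $L^p$ for $1<p<\infty$ prevent $Q_MTu$ from being as large as it would otherwise be permitted — should yield a bound strictly below $\|C_S\|$ \emph{precisely when} $\lambda_p<2$. Making this precise, and in particular extracting the sharp dependence on $\lambda_p$, is where I expect the real work of the proof to lie; the threshold $\lambda_p<2$ should be compared with the threshold $\alpha>1/2$ in the renorming result stated above, which is likewise a ``renorming by a factor less than $2$'' condition, and with the Hilbertian case $p=2$, where the analogous rigidity is exact.

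Granting the estimate, the covering assembles itself: for $T\in S_{B(L^p[0,1])}$ we localise to find $N\le M$ and $S\in\mathcal D$ with $\|S-P_MTP_N\|<\varepsilon$, and then $T$ lies in the ball about $C_S$ of radius $\rho$, which avoids one and the same ball $B(0,\delta)$ with $\delta=\inf_S(\|C_S\|-\rho)>0$. The radii are uniformly bounded and all the balls miss $B(0,\delta)$, so $B(L^p[0,1])$ has the UBCP. Equivalently, one may read the whole argument as a stability statement: $B(L^p[0,1])$ carries, up to the factor $\lambda_p$, the norm of $B(Y)$ with $Y=(L^p[0,1],\|\cdot\|_{\mathrm{sym}})$ a space with a shrinking $1$-unconditional basis, and the construction shows that the UBCP of such a $B(Y)$ survives an equivalent renorming by any factor strictly less than $2$ — which is exactly why the conclusion holds for $3/2<p<3$ while the method breaks down at the endpoints.
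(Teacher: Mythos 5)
You have identified the right engine — the Haar basis together with the fact that $3/2<p<3$ is exactly $K_u(p)=\max(p,q)-1<2$ — and the right shape of the conclusion (countable centres of norm about $2$ from a dense family, radius strictly smaller than the centre norm, hence uniform radii and a uniform gap at the origin). But the proof has a genuine gap at its decisive point: the inequality $\|T-C_S\|<\rho<\|C_S\|$ is never established. You say the crude triangle bound fails and that a sign-averaging argument combined with the uniform convexity/smoothness of $L^p$ ``should yield'' the bound precisely when $\lambda_p<2$; that is a conjecture, not an estimate, and it is exactly where the theorem lives. The difficulty is created by your own design of the centres: by localising the \emph{domain} (centres of the form $J_MSP_N$), the centre retains no information about $TQ_N$, so for $x=u+v$ with $v=Q_Nx$ you must control $\|{-P_MTu}+Tv\|$ for completely unconstrained interaction between the front-block behaviour and the tail behaviour of $T$; near-norm-attainment of $T$ at one vector of the front block does not control $Q_MTu$ or $Tv$ at other vectors, and nothing in the sketch ties the resulting constant to the threshold $\lambda_p<2$. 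Your closing ``equivalently'' reduction to the $1$-unconditional renorming of $L^p$ also does not close the gap: the BCP/UBCP is not an isomorphic invariant, and the obvious quantitative transfer through a $\lambda_p$-isomorphism (cover the renormed sphere by balls of norm-$2$ centres and radius slightly above $1$, then rescale) only works when roughly $\lambda_p<\sqrt2$, a strictly smaller range of $p$ than claimed.

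The paper's proof avoids the domain truncation altogether, and this is the missing idea. Write $Tx=\sum_n e_n^*T\otimes e_n(x)$ for the Haar system and approximate the \emph{rows} $e_n^*T$ globally: since $(L^p)^*=L^q$ is separable, pick $x_{m_n}^*$ in a countable dense subset $\mathcal A$ of the unit ball of $L^q$ with $\|x_{m_n}^*-e_n^*T\|\le\varepsilon_2$, and take as centres the norm-$2$ operators $2\sum_{n=1}^Kx_{m_n}^*\otimes e_n/\|\sum_{n=1}^Kx_{m_n}^*\otimes e_n\|$, where $K$ is chosen (by monotonicity of the Haar basis) so that $\|\sum_{n=1}^Ke_n^*T\otimes e_n\|\ge1-\varepsilon_1$. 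Then $T-C$ is, up to errors of order $K\varepsilon_2$ and $\varepsilon_1$, the left Haar multiplier with coefficient $1-2/\nu\approx-1$ on the first $K$ rows and $1$ on the remaining rows applied to $T$, so a single application of unconditionality gives $\|T-C\|\le(2-\kappa)\max\bigl(|1-2/\nu|,1\bigr)\|T\|+\text{small}\le 2-\kappa/2$, where $\kappa=3-\max(p,q)>0$ and $\nu=\|\sum_{n=1}^Kx_{m_n}^*\otimes e_n\|$. This is simultaneously the uniform radius bound and the uniform separation from the origin ($\|C\|-\text{radius}=\kappa/2$), so the UBCP follows; the condition $3/2<p<3$ enters only through this one use of $K_u(p)<2$. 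If you want to salvage your localisation scheme, you would have to prove a sharp two-block inequality of the type you allude to, which is substantially harder than the row-approximation argument and is not supplied in your proposal.
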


This paper is organized as follows: In Section 2, for convenience, we give a new proof for the result $\ell^{\infty}/ c_{0}$ fails the BCP by constructing recursively. We focus on the Banach space $X$ with a shrinking $1$-unconditional basis. We show that the norm of any operators in the quotient Banach algebra $B(X)/ K(X)$ can be approximated by the norm of an operator sequence in $B(X)$. Then by constructing an equivalent norm on $B(X)$, we present a characterization for the BCP of the renormed space $X_\alpha=(B(X),\|\cdot\|_\alpha)$.  We prove that $B(X)/ K(X)$ does not have the BCP, which implies that the Calkin algebra $B(H)/ K(H)$ where $H$ is an infinite-dimensional separable Hilbert space, $B(\ell^p)/K(\ell^p)$ ($1 \leq p <\infty$) and $B(c_0)/K(c_0)$ all fail the BCP. In Section 3, we prove that $B(L^p[0,1])$ has the UBCP for $3/2< p < 3$.

The following is a list of notations that will be used in this article.

 \begin{itemize}
    \item$\N_+$ --- the set of positive integers.
    \item$\N_{\geq m}$ --- the set of positive integers which are greater than or equal to positive integer $m$.
    \item$\N_+^2$ --- the set of positive integer pairs.
    \item$\operatorname{span} \{E\} $ --- the linear space spanned by the set $E$.
    \item $a \otimes b$ --- the rank one operator defined by $a\otimes b (x) = a(x)b$ for all $x,b \in X$ and $a \in X^*$.
    \item $id_X$--- the identity operator on Banach space $X$.
\end{itemize}


\section{The ball-covering property for $B(X)/ K(X)$ and renorming}

We need the following lemma first.
\begin{lem}\label{lem 1}
  There is a map $\pi:\N_+ \to \N_+$ such that for all $n,m \in \N_+$, we have $\pi^{-1}(n) \cap \N_{\geq m} \neq \emptyset.$
\end{lem}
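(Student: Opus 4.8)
The goal is to construct a single function $\pi:\N_+\to\N_+$ whose fibres $\pi^{-1}(n)$ are each \emph{cofinal} in $\N_+$, i.e.\ unbounded. The plan is to realise the standard "interleaving" or "triangular enumeration" idea concretely. First I would fix a bijection $\langle\cdot,\cdot\rangle:\N_+^2\to\N_+$ (for instance the Cantor pairing function, which the paper already gestures at with its $\N_+^2$ notation), and simply set $\pi(\langle j,k\rangle)=j$ for all $j,k\in\N_+$; equivalently, $\pi$ sends the $k$-th element of the $j$-th "block" of a partition of $\N_+$ into countably many infinite blocks to $j$. Then $\pi^{-1}(j)=\{\langle j,k\rangle : k\in\N_+\}$, which is an infinite subset of $\N_+$, hence unbounded.

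The verification is then immediate: given $n,m\in\N_+$, the set $\pi^{-1}(n)$ is infinite, so it is not contained in the finite set $\{1,\dots,m-1\}$; therefore there exists $p\in\pi^{-1}(n)$ with $p\ge m$, i.e.\ $p\in\pi^{-1}(n)\cap\N_{\ge m}$, which is exactly the assertion. If one prefers an entirely explicit formula avoiding the pairing function, an alternative is to define $\pi$ recursively (as the paper's later "constructing recursively" remark suggests is its style): list $\N_+$ and assign values so that after stage $N$ every value $1,\dots,N$ has already been used at least $N$ times and will be used again later; one bookkeeping recursion achieves this. Either route works; I would present the pairing-function version since it is the shortest and requires no induction.

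There is essentially no obstacle here — the lemma is a soft combinatorial fact, and the only "choice" is presentational: whether to invoke a standard bijection $\N_+^2\cong\N_+$ or to spell out a recursive construction. I would choose the former for brevity, remarking only that such a bijection exists by elementary means. The point worth flagging for the reader is the \emph{use} of the lemma downstream: $\pi$ will be used to re-index a doubly-indexed family of operators (or balls) by a single index in such a way that each "coordinate" $n$ is revisited arbitrarily far out, which is precisely what the cofinality of the fibres guarantees.
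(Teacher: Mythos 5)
Your proposal is correct and is essentially identical to the paper's proof: the paper also fixes a bijection $\phi:\N_+\to\N_+^2$ and takes $\pi=\phi_1$ to be the first-coordinate projection, so that each fibre is infinite and hence meets $\N_{\geq m}$ for every $m$. No gaps; the verification via cofinality of the fibres is exactly what is needed.
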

\begin{proof}
  Since $\N_+^2$ is countable, there is a bijection $$\phi: \N_+ \to \N_+^2, \quad n \mapsto (\phi_1(n), \phi_2(n)),$$ where $\phi_i, i=1, 2$ is a map from $\N_+$ to $\N_+$, then $\pi= \phi_1: \N_+ \to \N_+$ is the desired map.
\end{proof}

The quotient algebra $\ell^{\infty}/ c_{0}$ is a commutative analog of the Calkin algebra and fails the BCP. We give a new proof for the following theorem which is different from the original proof in \cite{CCL2008}.

\begin{thm}
$\ell^{\infty}/ c_{0}$ fails the BCP.
\end{thm}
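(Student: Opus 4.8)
The plan is to argue by contradiction: suppose $\ell^\infty/c_0$ has the BCP, so there is a sequence of balls $B(\xi_n, r_n)$ covering the unit sphere with $0 \notin B(\xi_n, r_n)$, i.e. $\|\xi_n\| > r_n$ for every $n$. Lift each center to a representative $x_n = (x_n(k))_k \in \ell^\infty$ with $\|x_n\|_{\ell^\infty/c_0} = \limsup_k |x_n(k)|$; after passing to representatives we may assume $\sup_k |x_n(k)| < \|\xi_n\| + \varepsilon_n$ for a small $\varepsilon_n$, or more simply work directly with the quotient norm. The idea is to build, recursively, a single point $z \in S_{\ell^\infty/c_0}$ that lies outside every $B(\xi_n, r_n)$, contradicting the covering. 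The recursion is where Lemma~\ref{lem 1} enters: the map $\pi$ lets us revisit each index $n$ infinitely often along a strictly increasing sequence of coordinate blocks, so that we can ``diagonalize against $\xi_n$'' not just once but on a tail, which is exactly what is needed since only the tail behavior of a sequence matters in $\ell^\infty/c_0$.

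Concretely, I would construct an increasing sequence of integers $0 = m_0 < m_1 < m_2 < \cdots$ and define $z \in \ell^\infty$ blockwise: on the block of coordinates $(m_{j-1}, m_j]$ we set $z$ to be a constant of modulus $1$, chosen so as to push $z$ away from $x_{\pi(j)}$ on that block. Precisely, on that block pick the constant value $c_j$ with $|c_j| = 1$ maximizing (or at least making large) $|c_j - x_{\pi(j)}(k)|$ for $k$ in the block; since $\|x_{\pi(j)}\|_{\ell^\infty/c_0} = \limsup_k |x_{\pi(j)}(k)| =: a_{\pi(j)}$, for $m_j$ large enough all coordinates $x_{\pi(j)}(k)$ with $k > m_{j-1}$ satisfy $|x_{\pi(j)}(k)| \le a_{\pi(j)} + \delta_j$, and one can choose $c_j$ (a suitable unimodular scalar, e.g. $\pm 1$ in the real case or a rotation in the complex case) so that $|c_j - x_{\pi(j)}(k)| \ge \sqrt{1 + a_{\pi(j)}^2} - \delta_j'$, or at the very least $|c_j - x_{\pi(j)}(k)| > \|\xi_{\pi(j)}\| \wedge$ something strictly exceeding $r_{\pi(j)}$. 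The clean way is: since $\|\xi_n\| > r_n$, we have $a_n > r_n$; and a unimodular $c$ can always be chosen with $|c - w| \ge \max(1, |w|) \ge |w|$ when $|w|$ is close to $a_n > r_n$, giving $|c_j - x_{\pi(j)}(k)| > r_{\pi(j)} + \eta_j$ for $k$ in the block. Then $\|z\|_{\ell^\infty/c_0} = \limsup_k |z(k)| = 1$, so $z \in S_{\ell^\infty/c_0}$.

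It remains to check that $z \notin B(\xi_n, r_n)$ for each fixed $n$, which is the payoff of the recursion. Fix $n$. By Lemma~\ref{lem 1}, $\pi^{-1}(n)$ is infinite, so there are infinitely many blocks on which $z$ was chosen to be $c_j$ with $|c_j - x_n(k)| > r_n + \eta_j$ for all $k$ in that block. Hence $\limsup_k |z(k) - x_n(k)| \ge r_n + (\text{positive gap})$, i.e. $\|z - \xi_n\|_{\ell^\infty/c_0} > r_n$, so $z \notin B(\xi_n, r_n)$. Since $z \in S_{\ell^\infty/c_0}$ but $z$ is in none of the covering balls, we have the desired contradiction.

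The main obstacle is the scalar-selection step: ensuring that at each stage one can pick a \emph{single} unimodular constant $c_j$ that simultaneously stays at distance $> r_{\pi(j)}$ from \emph{all} the coordinates $x_{\pi(j)}(k)$ appearing in the block $(m_{j-1}, m_j]$. This is why one first chooses $m_j$ large (so that those coordinates are pinned near the single value $a_{\pi(j)}$ in modulus, up to $\delta_j$), and then exploits $a_{\pi(j)} = \|\xi_{\pi(j)}\| > r_{\pi(j)}$ together with the elementary fact that for any $w$ there is a unimodular $c$ with $|c - w| \ge \max(|w|, 1)$ (in the real case take $c = -\operatorname{sgn}(\operatorname{Re} w)$; in the complex case take $c = -w/|w|$). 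Keeping the error parameters $\delta_j, \eta_j$ summably small and the blocks growing fast enough makes the $\limsup$ estimates go through cleanly; everything else is bookkeeping.
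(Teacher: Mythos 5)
Your overall strategy --- diagonalize against the centers using the map $\pi$ of Lemma~\ref{lem 1} so that each index is revisited infinitely often, place unimodular values antipodal to the relevant center, and conclude via $\|\tilde x\|=\limsup_k|\tilde x_k|$ --- is exactly the paper's. But the step you yourself flag as the main obstacle is resolved incorrectly, and this is a genuine gap. You want a single unimodular constant $c_j$ with $|c_j-x_{\pi(j)}(k)|>r_{\pi(j)}$ for \emph{all} $k$ in the block $(m_{j-1},m_j]$, justified by the claim that for $m_j$ large the coordinates are ``pinned near the single value $a_{\pi(j)}$ in modulus.'' That is false: $\limsup_k|x_n(k)|=a_n$ gives only the upper bound $|x_n(k)|\le a_n+\delta$ on the tail, not a lower bound. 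A coordinate in the block may be $0$, in which case $|c_j-x_n(k)|=1<r_n$ whenever $r_n>1$; and in the complex case coordinates of modulus near $a_n$ can carry arbitrary phases, so when $a_n$ is close to $1$ some of them may lie arbitrarily close to $c_j$ itself. In general no single unimodular scalar is uniformly far from every coordinate of the block, so the estimate $\limsup_k|z(k)-x_n(k)|\ge r_n+\eta$ as you derive it does not follow.

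The repair is to notice that you never needed uniform control over a whole block: since the quotient norm is a $\limsup$, it suffices to produce \emph{one} coordinate per visit (hence infinitely many coordinates in total for each fixed $n$) at which the distance is large. Choose that coordinate from a subsequence $(n_{ik})_k$ realizing $\limsup_k|x_i(k)|=\|\xi_i\|$, put the antipodal unimodular value $-x_i(k)/|x_i(k)|$ there, and set $z=0$ elsewhere; then $|z(k)-x_i(k)|=1+|x_i(k)|\to 1+\|\xi_i\|$ along those coordinates, so $\|z-\xi_i\|\ge 1+\|\xi_i\|>r_i$. Note also that $0\notin B(\xi_i,r_i)$ only yields $\|\xi_i\|\ge r_i$, not a strict inequality, so it is the extra $+1$ from the antipodal choice (rather than your bound $|c-w|\ge\max(1,|w|)$) that pushes the distance strictly past $r_i$. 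This one-coordinate-per-visit version is precisely the paper's construction.
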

\begin{proof}
For any $\tilde{x}=\{\tilde{x}_{n}\}_{n=1}^{\infty}\in \ell^{\infty}/ c_{0}$, we have $\|\tilde{x}\|=\overline{\lim}_{n\rightarrow\infty}|\tilde{x}_{n}|$.
Suppose that $\ell^{\infty}/ c_{0}$ has the BCP, then there exists a sequence $\{B(\tilde{x}_{i}, r_{i})\}_{i=1}^{\infty}$ of open balls with $\|\tilde{x}_{i}\|\geq r_{i}>0$ for all $i\in \mathbb{N}$ such that the unit sphere $S$ of $\ell^{\infty}/ c_{0}$ is contained in $\bigcup_{i=1}^{\infty}B(\tilde{x}_{i},r_{i})$.
For $\tilde{x}_{i}\in \ell^{\infty}/ c_{0}$, there exists a subsequence $\{\tilde{x}_{i,n_{ik}}\}_{k=1}^{\infty}$ such that $$\lim_{k\rightarrow\infty}|\tilde{x}_{i,n_{ik}}|=\|\tilde{x}_{i}\|.$$

Then we will construct the outside sequence recursively. Let $\pi$ be the map of Lemma \ref{lem 1}.
First let $k=1$, $\lambda(1)=n_{11}$ and $$\tilde{x}_{0,\lambda(1)}=-\displaystyle\frac{\tilde{x}_{1,n_{11}}}{|\tilde{x}_{1,n_{11}}|}.$$ Suppose that for all $k\leq N$, $\tilde{x}_{0,\lambda(k)}$ and $\lambda(k)$ have already been constructed, then when $k=N+1$, let $\lambda(N+1)$ be the smallest integer of $\{n_{\pi(N+1),s}\}_{s=1}^{\infty}$ satisfying \[n_{\pi(N+1),s}>\lambda(N)\]
and let
$$\tilde{x}_{0,\lambda(N+1)}=-\displaystyle\frac{\tilde{x}_{\pi(N+1),\lambda(N+1)}}
{\left|\tilde{x}_{\pi(N+1),\lambda(N+1)}\right|}.$$
Finally, for any $j\neq \lambda(k)$, let $\tilde{x}_{0,j}=0$.
Define $\tilde{x}_{0} :=\{\tilde{x}_{0,j}\}_{j=1}^{\infty}$, then we have $\|\tilde{x}_{0}\|=1$.
For all $i,m\in\mathbb{N}$, since $\pi^{-1}(i) \cap \N_{\geq m} \neq \emptyset$, there is a subsequence $\{\pi(k_s)\}_{s=1}^\infty$ such that $\pi(k_s)$ is constant $i$.
So the subsequence $\{\tilde{x}_{0,\lambda(k_s)}\}_{s=1}^\infty$ satisfies 
\[ \left|\tilde{x}_{0,\lambda(k_s)}-\tilde{x}_{i,\lambda(k_s)}\right|=1+|\tilde{x}_{i,\lambda(k_s)}|.\]
Thus by the equal expression of the norm on $l^\infty / c_0$, we have
$$\|\tilde{x}_{0}-\tilde{x}_{i}\|\geq 1+\|\tilde{x}_{i}\|> r_{i}.$$
This implies that $\tilde{x}_{0}\notin\bigcup_{i=1}^{\infty}B(\tilde{x}_{i},r_{i})$, and it is a contradiction. So $\ell^{\infty}/ c_{0}$ fails the BCP.
\end{proof}

Let $(e_n)_{n=1}^\infty$ be a basis for a Banach space $X$. Recall that the basis $(e_{n})_{n=1}^{\infty}$ is {\it unconditional} if for each $x\in X$ the series $\sum_{n=1}^{\infty}e_{n}^{\ast}(x)e_{n}$ converges unconditionally and is {\it shrinking} if the coordinate functionals $(e_{n}^{\ast})_{n=1}^{\infty}$ are a basis for $X^\ast$ \cite{AK2011,LT1977,M1998}.

\begin{defn}[\cite{AK2011,LT1977,M1998}]
    Let $(e_n)_{n=1}^\infty$ be an unconditional basis of a Banach space $X$, then the unconditional basis constant $K_u$ is the smallest real number $K$ $(K\geq1)$ such  that for all $N \in \N_+$ and $|a_n| \leq |b_n|$ whenever $n=1,2,\cdots,N$, the following inequality holds
    \[ \norm{\sum_{n=1}^N a_n e_n} \leq K \norm{\sum_{n=1}^N b_n e_n}.\]
\end{defn}
If the unconditional basis constant of the unconditional basis $(e_n)_{n=1}^\infty$ is $1$, then it is said to be {\it $1$-unconditional}.


For all $T \in X_\alpha=(B(X),\|\cdot\|_\alpha)$, let $(e_n)_{n=1}^\infty$ be an $1$-unconditional basis for a Banach space $X$ with biorthogonal functionals $(e_n^*)_{n=1}^\infty$ (simplified as $\{(e_n,e_n^*)\}_{n=1}^\infty$), then
\begin{align*}
Tx  & =id_X T id_X (x)\\
    & =\sum_{n=1}^\infty e_n^*\left(T\left(\sum_{m=1}^\infty e_m^*(x)e_m\right)\right)e_n \\
    & =\sum_{n=1}^\infty\sum_{m=1}^\infty e_n^*T(e_m) e_m^*\otimes e_n (x).
\end{align*}
We denote \[T^{u,v}_{r,s}:=\sum_{n=u}^v \sum_{m=r}^s e_n^*T(e_m) e_m^* \otimes e_n\] for all $u,v,r,s \in \N_+\cup\{\infty\}$ with $u\leq v$ and $r \leq s$. If we denote the partial projection by $P_{r,s}$, that is,
\[P_{r,s} :=\sum_{n=r}^s e_n^*\otimes e_n,\]  since $\{(e_n,e_n^*)\}_{n=1}^\infty$ is $1$-unconditional, then for all $1\leq r \leq s \leq \infty$, we have
\[\norm{P_{r,s}} \leq 2.\]
So
\[T^{u,v}_{r,s}=P_{u,v} T P_{r,s}.\]
Clearly, $\TT{u}{v}{r}{s}$  has operator norm bound $4\|T\|$ and thus is well-defined.


The next lemma illustrates that if $X$ is a Banach space with an $1$-unconditional basis, then we can approximate the norm of any operator in the quotient Banach algebra $B(X)/ K(X)$ by the norm of an operator sequence in $B(X)$ combined with some partial projections.

\begin{lem} \label{lem 2}
  Let $(e_n)_{n=1}^\infty$ be an $1$-unconditional basis for a Banach space $X$ with biorthogonal functionals $(e_n^*)_{n=1}^\infty$  and $T \in B(X)/ K(X)$, then there are four sequences $\{w_i\}_{i=1}^\infty$, $\{W_i\}_{i=1}^\infty$, $\{v_i\}_{i=1}^\infty$ and $\{V_i\}_{i=1}^\infty$ such that \[\norm{T}_{B(X)/ K(X)}=\lim_i \norm{\TT{w_i}{W_i}{v_i}{V_i}}_{B(X)}.\]
\end{lem}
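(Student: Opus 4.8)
The plan is to realize the quotient norm $\norm{T}_{B(X)/K(X)}$ as a limit of norms of ``corner'' truncations $\TT{w_i}{W_i}{v_i}{V_i}$, exploiting the fact that compact operators are, up to small error, supported on finitely many coordinates of the $1$-unconditional basis. First I would fix a representative $S \in B(X)$ of $T$ and recall that $\norm{T}_{B(X)/K(X)} = \inf\{\norm{S+K}_{B(X)} : K \in K(X)\}$; the key structural input is that for the partial projections $P_{r,s}$ one has $\norm{P_{r,s}} \leq 2$ and, since the basis is shrinking, $P_{1,N} \to id_X$ strongly and $P_{1,N}^* \to id_{X^*}$ strongly, so that for any \emph{compact} $K$ one has $\norm{K - P_{1,N}KP_{1,N}} \to 0$ as $N \to \infty$. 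Equivalently, for compact $K$, the tail corner $\norm{K - K_{1,N}^{1,N}}$ (in our $\TT{}{}{}{}$ notation, $K_{1,N}^{1,N} = P_{1,N}KP_{1,N}$) tends to $0$.

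The main step is the two-sided estimate. For the easy direction, note that for any $u \le v$, $r \le s$ and any $K \in K(X)$,
\[
\norm{\TT{u}{v}{r}{s}(S)}_{B(X)} = \norm{P_{u,v} S P_{r,s}}_{B(X)} \leq \norm{P_{u,v}(S+K)P_{r,s}}_{B(X)} + \norm{P_{u,v}KP_{r,s}}_{B(X)},
\]
and one wants to choose the truncation so that $\norm{P_{u,v}KP_{r,s}}_{B(X)}$ is negligible. Here is where I would use Lemma~\ref{lem 1}: enumerate a sequence of compact operators $(K_i)$ with $\norm{S+K_i}_{B(X)} \to \norm{T}_{B(X)/K(X)}$, and use the map $\pi$ to interleave the indices so that each $K_i$ is revisited infinitely often along the sequence, while the ``window'' of the truncation (the interval $[v_i, W_i]$ on the domain side and $[v_i,V_i]$ on the range side, pushed off to infinity) eventually avoids the finite-dimensional part carrying the mass of $K_i$. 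Concretely, recursively choose windows $[w_i, W_i]$, $[v_i, V_i]$ marching to infinity so that $\norm{P_{w_i,W_i} K_{\pi(i)} P_{v_i,V_i}}_{B(X)} < 1/i$, which is possible because $K_{\pi(i)}$ is compact, hence $\norm{(id - P_{1,N})K_{\pi(i)}} \to 0$ and $\norm{K_{\pi(i)}(id - P_{1,N})} \to 0$. This gives $\limsup_i \norm{\TT{w_i}{W_i}{v_i}{V_i}(S)}_{B(X)} \leq \norm{T}_{B(X)/K(X)}$ after taking the subsequence on which $\pi(i)$ is constant and $\norm{S+K_i} \to \norm{T}$. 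For the reverse inequality, observe that $\TT{w_i}{W_i}{v_i}{V_i}(S) = P_{w_i,W_i}(S+K)P_{v_i,V_i}$ for \emph{every} $K$, and since $\norm{P_{w_i,W_i}}, \norm{P_{v_i,V_i}} \leq 2$ this is bounded by $4\norm{S+K}_{B(X)}$ — but the constant $4$ is the wrong bound if we want equality.

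I expect the constant to be the main obstacle, and the resolution is to not use the worst-case bound $\norm{P} \le 2$ naively on the reverse side but instead to refine: since the basis is $1$-unconditional, for a single coordinate block $P_{r,s}$ acting after we have already ``isolated'' the relevant coordinates, the effective norm contributed is controlled by $1$, not $2$, when one is careful to compare against the right tail; more precisely, the reverse inequality should be obtained not from $\norm{\TT{w_i}{W_i}{v_i}{V_i}(S)} \ge$ something, but by noting that the \emph{limit} of the corner norms, computed along a diagonal sequence of windows marching to infinity, recovers the infimum because any near-optimal $K$ has its mass captured by $P_{1,N}$ for $N$ large, so $P_{w_i,W_i}(S+K)P_{v_i,V_i}$ differs from $P_{w_i,W_i}SP_{v_i,V_i}$ by a term tending to $0$, while simultaneously $\norm{P_{w_i,W_i}SP_{v_i,V_i}} \geq \norm{S + K'}_{B(X)} - o(1)$ for the compact correction $K' = S - P_{w_i,W_i}SP_{v_i,V_i}$ adjusted appropriately — one checks this $K'$ is compact (finite rank up to the error) and hence $\norm{P_{w_i,W_i}SP_{v_i,V_i}} \ge \norm{T}_{B(X)/K(X)} - o(1)$ using $1$-unconditionality to keep the projection constants from inflating. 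Combining the two one-sided estimates along the common subsequence yields $\norm{T}_{B(X)/K(X)} = \lim_i \norm{\TT{w_i}{W_i}{v_i}{V_i}(S)}_{B(X)}$, and relabeling the subsequence gives the four sequences $\{w_i\}, \{W_i\}, \{v_i\}, \{V_i\}$ claimed in the statement.
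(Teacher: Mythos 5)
Your upper estimate ($\limsup_i \norm{\TT{w_i}{W_i}{v_i}{V_i}} \le \norm{T}_{B(X)/K(X)}$) is essentially sound once you note that a $1$-unconditional basis has suppression constant $1$, so $\norm{P_{u,v}(S+K)P_{r,s}} \le \norm{S+K}$ and the ``constant $4$'' worry disappears; you do not even need the interleaving map $\pi$ here --- it suffices that the $i$-th window avoids a compact $K_i$ with $\norm{S+K_i} \le \norm{T}_{B(X)/K(X)} + 1/i$. As written, passing to a subsequence on which $\pi(i)$ is a fixed $j$ only yields the bound $\norm{S+K_j}$, not the infimum over all compacts. Note also that the lemma does not assume the basis is shrinking, so you may not invoke $P_{1,N}^* \to id_{X^*}$ strongly or the two-sided compression $\norm{K - P_{1,N}KP_{1,N}} \to 0$; the paper only ever uses the one-sided fact $\norm{(id_X - P_{1,N})K} \to 0$ for compact $K$.

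The genuine gap is the lower estimate. You set $K' = S - P_{w_i,W_i}SP_{v_i,V_i}$ and call it ``compact (finite rank up to the error),'' but $P_{w_i,W_i}SP_{v_i,V_i}$ is finite rank, so $K'$ is compact only when $S$ itself is; the inequality $\norm{P_{w_i,W_i}SP_{v_i,V_i}} \ge \inf_K\norm{S+K}$ therefore does not follow. Worse, the claim is simply false for windows chosen only to avoid the mass of the $K_i$'s: nothing prevents a badly placed or too-narrow window from making the corner norm collapse far below $\norm{T}_{B(X)/K(X)}$, so your selection principle controls the $\limsup$ but says nothing about the $\liminf$. What is needed --- and what the paper does --- is to choose windows that \emph{capture} the norm: since $P_{1,n}T$ is finite rank, $\norm{\TT{n+1}{\infty}{1}{\infty}} \ge \norm{T}_{B(X)/K(X)}$ and in fact converges to it; by $1$-unconditionality $\norm{\TT{m}{v}{1}{\infty}}$ increases to $\norm{\TT{m}{\infty}{1}{\infty}}$ as $v \to \infty$, which yields disjoint row blocks $[m_i,M_i]$ with $\norm{\TT{m_i}{M_i}{1}{\infty}} \to \norm{T}_{B(X)/K(X)}$, and then a separate pigeonhole/diagonal argument (for fixed $N$ only finitely many blocks satisfy $\norm{\TT{m_i}{M_i}{1}{N}} > \varepsilon$) produces disjoint column blocks $[v_i,V_i]$ losing only $2^{-i}$. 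This norm-capturing mechanism is absent from your proposal and cannot be recovered from the avoid-the-compacts selection alone.
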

\begin{proof}
  Obviously, for all $x \in X$, we have $\TT{1}{K}{1}{\infty}=\sum_{n=1}^K e^*_nT \otimes e_n$. Since $\{(e_n,e_n^*)\}_{n=1}^\infty$ is $1$-unconditional, for all $x \in X$,
  we have
  \[\norm{\sum_{n=1}^K e^*_n T \otimes e_n (x)} \leq \norm{\sum_{n=1}^{K+1} e^*_n T \otimes e_n (x)} \leq \|T x\|.\]
  Thus
  \[\norm{\sum_{n=1}^K e^*_nT \otimes e_n}_{B(X)} \leq \norm{\sum_{n=1}^{K+1} e^*_nT \otimes e_n}_{B(X)} \leq \|T\|.\]
  Since the monotone bounded series of real numbers must have limit and clearly the limit of $\left\{\norm{\sum_{n=1}^{K} e^*_nT \otimes e_n}\right\}_{K=1}^\infty$ is $\|T\|$, there exists $K_1$ such that
  \[\left|\norm{\TT{1}{K_1}{1}{\infty}}-\norm{T}\right|<2^{-1}.\]
  Note that $$\left(\TT{K_1+1}{\infty}{1}{\infty}\right)^{K_1+1,s}_{1,\infty}=\TT{K_1+1}{s}{1}{\infty}.$$
  Similarly, for each $i$, there is $K_{i+1}$ such that
  \[\left|\norm{\TT{K_i+1}{K_{i+1}}{1}{\infty}}-\norm{\TT{K_i+1}{\infty}{1}{\infty}}\right|<2^{-i-1}.\]
  Thus we get a sequence $\{K_i\}_{i=1}^\infty$.
  Since $X$ has Schauder basis, the finite rank operator space $F(X)$ is operator norm dense in compact operator space $K(X)$ by the approximation $P_{1,n}C \to C$ $(n \to \infty) $ for any compact operator $C$.
  So \[\norm{T}_{B(X)/ K(X)}=\lim_i \norm{\TT{K_i}{\infty}{1}{\infty}}_{B(X)}.\]
  Take $m_i=K_i+1$ and $M_i=K_{i+1}$, then \[\norm{T}_{B(X)/ K(X)}=\lim_i \norm{\TT{m_i}{M_i}{1}{\infty}}_{B(X)}.\]

  Next, we will construct $\{v_i\}_{i=1}^\infty$ and $\{V_i\}_{i=1}^\infty$. By the same proof as the first half part, for all $i \in \N_+$, there exists a large enough $U_i>U_{i-1}$ (let $U_0=0$) such that
  \[\left| \norm{\TT{m_i}{M_i}{1}{\infty}}_{B(X)} -\norm{\TT{m_i}{M_i}{1}{U_i}}_{B(X)}\right| \leq 2^{-i}.\]
  We then check that for all $N \in \N_+$ and $\varepsilon>0$, there exist only finite many $\TT{m_i}{M_i}{1}{N}$ with $\norm{\TT{m_i}{M_i}{1}{N}} > \varepsilon$.
  Suppose not, let $\norm{\TT{m_{n_i}}{M_{n_i}}{1}{N}}> \varepsilon$ for all $i$, then there exists $x_i\in X$ with $\|x_i\|=1$ such that \[\norm{\TT{m_{n_i}}{M_{n_i}}{1}{N}x_i }>2^{-1}\varepsilon.\]
  Since \[\TT{m_{n_i}}{M_{n_i}}{1}{N}= \TT{m_{n_i}}{M_{n_i}}{1}{N}|_{\operatorname{span}\{e_i\}_{i=1}^N}\]  and the unit sphere of $\operatorname{span}\{e_i\}_{i=1}^N$ is sequentially compact, $\{x_i\}_{i=1}^\infty$  has a convergent subsequence. Without loss of generality,  by passing to a subsequence,  we can assume that for a fixed $x_j$ and all $\TT{m_{n_i}}{M_{n_i}}{1}{N}$  we have
  \[\norm{\TT{m_{n_i}}{M_{n_i}}{1}{N}x_j}>4^{-1}\varepsilon.\]
  We claim that $\norm{\sum_{i=1}^\infty\TT{m_{n_{r_i}}}{M_{n_{r_i}}}{1}{N}x_j}=\infty$ for some subsequence $\{n_{r_i}\}_{i=1}^\infty$.
  If not, then \[E:=\left\{\pm \sum_{i=1}^\infty\TT{m_{n_{r_i}}}{M_{n_{r_i}}}{1}{N}x_j: \{n_{r_i}\}_{i=1}^\infty \subseteq \{n_i\}_{i=1}^\infty \right\}\] will be a well-defined subset of $X$. Clearly, the cardinal number of $E$ is $2^{\aleph_0}$ which contradicts with that $X$ has a Schauder basis.
  Thus we have
  \[\infty=\norm{\sum_{i=1}^\infty\TT{m_{n_{r_i}}}{M_{n_{r_i}}}{1}{N}x_j} \leq \norm{\sum_{i=1}^\infty\TT{m_{n_{r_i}}}{M_{n_{r_i}}}{1}{N}} \leq \|T\|,\]
which is a contradiction.

  By the finiteness of $\TT{m_i}{M_i}{1}{N}$ with \[\norm{\TT{m_i}{M_i}{1}{N}} > 2^{-2}\] for $N=U_2$, there is a large enough $n_2>n_1+1$ where $n_1=1$ such that
  \begin{align*}
    \quad\left|\norm{\TT{m_{n_2}}{M_{n_2}}{1}{U_{n_2}}}_{B(X)}-\norm{\TT{m_{n_2}}{M_{n_2}}{U_2+1}{U_{n_2}}}_{B(X)}\right|\leq \norm{\TT{m_{n_2}}{M_{n_2}}{1}{U_{2}}}_{B(X)}\leq 2^{-2}.
  \end{align*}
  If for all $r<R$, $n_r$ has been chosen. Then by the finiteness of  $\TT{m_i}{M_i}{1}{N}$ with \[\norm{\TT{m_i}{M_i}{1}{N}} > 2^{-R}\] for $N=U_{n_{R-1}}$, there is a large enough $n_{R}>n_{R-1}+1$ such that
  \begin{align*}
    \quad\left|\norm{\TT{m_{n_R}}{M_{n_R}}{1}{U_{n_R}}}_{B(X)}-\norm{\TT{m_{n_R}}{M_{n_R}}{U_{n_{R-1}+1}}{U_{n_R}}}_{B(X)}\right|\leq \norm{\TT{m_{n_R}}{M_{n_R}}{1}{U_{n_{R-1}}}}_{B(X)}\leq 2^{-R}.
  \end{align*}
  Thus we get a strictly monotone increasing sequence $\{n_j\}_{j=1}^\infty$. For all $j\in \N_+$, we define
  \[ w_j = m_{n_j} , W_j=M_{n_j}, v_j=U_{n_{j-1}}+1\ (v_1=1) \text{ and } V_j=U_{n_j}. \]
  Then for all $n \in \N_+$,
   \[\left| \norm{\TT{w_i}{W_i}{1}{\infty}}_{B(X)} -\norm{\TT{w_i}{W_i}{v_i}{V_i}}_{B(X)}\right| \leq 2^{-i}+2^{-i} = 2^{-i+1}.\]
   Therefore we have \[\norm{T}_{B(X)/ K(X)}=\lim_i \norm{\TT{w_i}{W_i}{1}{\infty}}_{B(X)}=\lim_i \norm{\TT{w_i}{W_i}{v_i}{V_i}}_{B(X)}.\]
\end{proof}

Then we consider the BCP of the renormed space $X_\alpha=(B(X),\|\cdot\|_{\alpha})$.

\begin{thm}\label{main1}
 Let $(e_n)_{n=1}^\infty$ be an $1$-unconditional basis for a Banach space $X$ with biorthogonal functionals $(e_n^*)_{n=1}^\infty$ and $0 \leq \alpha \leq 1/2$, then the renormed space $X_{\alpha}=(B(X), \|\cdot\|_{\alpha})$ fails the BCP.
\end{thm}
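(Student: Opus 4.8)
The plan is to mimic the recursive "diagonal" construction used above for $\ell^\infty/c_0$, but now carried out inside $X_\alpha=(B(X),\|\cdot\|_\alpha)$, using Lemma~\ref{lem 2} to transfer the quotient norm to a manageable sequence of block operators $\TT{w_i}{W_i}{v_i}{V_i}$. Suppose, toward a contradiction, that $X_\alpha$ has the BCP, witnessed by balls $\{B(T_i,r_i)\}_{i=1}^\infty$ with $\|T_i\|_\alpha \geq r_i > 0$ covering the unit sphere of $X_\alpha$. For each $i$, apply Lemma~\ref{lem 2} to $T_i$ (viewed in $B(X)/K(X)$) to obtain block data $\{(w^{(i)}_k,W^{(i)}_k,v^{(i)}_k,V^{(i)}_k)\}_{k=1}^\infty$ realizing $\|T_i\|_{B(X)/K(X)}$ as a limit of the norms $\|\TT{w^{(i)}_k}{W^{(i)}_k}{v^{(i)}_k}{V^{(i)}_k}\|_{B(X)}$. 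The idea is to build a single operator $S$, supported on mutually disjoint "diagonal blocks" of index-rectangles, chosen so that $S$ sees each $T_i$ infinitely often along blocks where the relevant block norm of $T_i$ is essentially as large as $\|T_i\|_{B(X)/K(X)}$, while on each such block $S$ acts "in the opposite direction" to $T_i$ (on a suitable test vector), so that $\|S - T_i\|_{B(X)/K(X)} \geq 1 + \|T_i\|_{B(X)/K(X)}$ after passing to the limit.

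First I would set up the bookkeeping: use the map $\pi$ of Lemma~\ref{lem 1} so that the recursion step $N$ "serves" the operator $T_{\pi(N)}$, and each $T_i$ is served infinitely often. At step $N$, having already committed index-rectangles $R_1,\dots,R_{N-1}$ (each of the form $[u,v]\times[r,s]$ with finite entries) lying strictly to the "upper-right" of one another in both coordinates, I pick a block $\TT{w}{W}{v}{V}$ from the Lemma~\ref{lem 2} data for $T_{\pi(N)}$ whose row- and column-indices all exceed everything used so far (possible since the $w^{(i)}_k,v^{(i)}_k\to\infty$), and whose norm is within $2^{-N}$ of $\|T_{\pi(N)}\|_{B(X)/K(X)}$; I choose a unit vector $x_N \in \operatorname{span}\{e_m : m \in [v,V]\}$ nearly norming this block, set $R_N$ to be the supporting rectangle, and define $S$ on $R_N$ to be (a rank-one-type correction built from $e_m^*\otimes e_n$ over $R_N$) so that $\|(S-T_{\pi(N)})|_{R_N}\| \geq 1 + \|\TT{w}{W}{v}{V}T_{\pi(N)}\| - 2^{-N}$, arranging simultaneously that $\|S|_{R_N}\|_{B(X)} \leq 1$ so that the whole operator $S$ has $\|S\|_{B(X)}=1$ (here $1$-unconditionality of the basis is what keeps the disjoint blocks from interfering and keeps all the partial-projection norms controlled). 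Since $\alpha \leq 1/2$, we have $\|S-T_i\|_\alpha \geq \alpha\|S-T_i\|_{B(X)} + (1-\alpha)\|S-T_i\|_{B(X)/K(X)} \geq (1-\alpha)\big(1+\|T_i\|_{B(X)/K(X)}\big)$, and combining with $\|S\|_{B(X)/K(X)}=1$ and $\|T_i\|_{B(X)/K(X)} \geq \|T_i\|_\alpha - \alpha\|T_i\|_{B(X)}$... — more cleanly: because $S$ serves $T_i$ along infinitely many blocks, $\|S-T_i\|_{B(X)/K(X)} \geq 1 + \|T_i\|_{B(X)/K(X)}$, hence $\|S-T_i\|_\alpha \geq \alpha\,\|S-T_i\|_{B(X)} + (1-\alpha)(1+\|T_i\|_{B(X)/K(X)})$; using $\alpha\le 1/2$ and comparing with $r_i \le \|T_i\|_\alpha = \alpha\|T_i\|_{B(X)} + (1-\alpha)\|T_i\|_{B(X)/K(X)}$, one checks $\|S-T_i\|_\alpha > r_i$ for every $i$, contradicting that the balls cover $S \in S_{X_\alpha}$.

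The main obstacle I anticipate is making the "opposite direction" correction $S$ on each block genuinely achieve the additive gain $1+\|T_i\|_{B(X)/K(X)}$ in the quotient norm while keeping $\|S\|_{B(X)}=1$ globally. Concretely: on block $R_N=[u,v]\times[r,s]$ one has the finite-rank operator $\TT{u}{v}{r}{s}T_{\pi(N)} = P_{u,v}T_{\pi(N)}P_{r,s}$ with a near-norming unit vector $x_N$ supported on $[r,s]$; writing $y_N = P_{u,v}T_{\pi(N)}x_N/\|P_{u,v}T_{\pi(N)}x_N\|$, the natural choice is $S$-on-block $= -\,(e_{r}^* \text{-coordinate functional of } x_N)\otimes(\text{a unit vector along } y_N)$ scaled to norm one, so that $(S-T_{\pi(N)})x_N$ has norm $\geq 1 + \|\TT{u}{v}{r}{s}T_{\pi(N)}\|-2^{-N}$; then I must verify (i) that summing these over the disjoint $R_N$ yields a bounded operator of norm exactly $1$ — this is where $1$-unconditionality and the uniform bound $\|P_{r,s}\|\le 2$ are used, together with the fact that the row-rectangles $[u_N,v_N]$ are pairwise disjoint so the ranges are "unconditionally disjoint" — and (ii) that passing from one block to a $w^*$-type limit along the infinitely many $N$ with $\pi(N)=i$ does give the clean inequality $\|S-T_i\|_{B(X)/K(X)}\geq 1+\|T_i\|_{B(X)/K(X)}$, for which I would use that the quotient norm is the limsup of $\|(\text{tail of }S-T_i)|_{R_N}\|_{B(X)}$ over the serving blocks, exactly as the limit expression in Lemma~\ref{lem 2} is set up. I would isolate (i) as a short self-contained sublemma on the norm of a sum of operators supported on disjoint coordinate rectangles under a $1$-unconditional basis, and handle (ii) by a careful choice of the $2^{-N}$ error budget so all the approximations telescope.
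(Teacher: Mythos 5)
Your proposal is correct and essentially coincides with the paper's proof: both argue by contradiction, use Lemma \ref{lem 2} to represent each center's quotient norm by finite blocks, use the map $\pi$ of Lemma \ref{lem 1} to recursively select pairwise disjoint index-rectangles that serve each center infinitely often, derive $\|T_0-T(n)\|_{B(X)/K(X)}\ge 1+\|T(n)\|_{B(X)/K(X)}$ along norm-attaining vectors on those blocks, and conclude with $\|T_0-T(n)\|_\alpha\ge\|T(n)\|_\alpha+1-2\alpha\ge\|T(n)\|_\alpha> r_n$. The only (minor) difference is that the paper defines the outside operator blockwise as the negative of the normalized block of $T(\pi(N))$ itself rather than your rank-one correction, so the additive gain and the norm-one claim you flagged as the main obstacle come out directly once a norm-attaining unit vector on each finite-dimensional block is chosen.
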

\begin{proof}
  For any $0 \leq \alpha \leq 1/2$, suppose the contrary holds, then there is a sequence $\{B(T(n),r_n)\}_{n=1}^\infty$ such that for all $n \in \N_+$ we have $r_n <\norm{T(n)}_\alpha$ and the unit sphere of $X_\alpha$ is contained in $\bigcup_{n=1}^\infty B(T(n),r_n)$.
  For all $T(n)$, by Lemma \ref{lem 2}, we know that there are $\{m_{n,i}\}_{i=1}^\infty$, $\{M_{n,i}\}_{i=1}^\infty$, $\{u_{n,i}\}_{i=1}^\infty$ and $\{U_{n,i}\}_{i=1}^\infty$ such that  \[\norm{T(n)}_{B(X)/ K(X)}=\lim_i \norm{\TTn{m_{n,i}}{M_{n,i}}{u_{n,i}}{U_{n,i}}{n}}_{B(X)}.\]

  Let $\pi$ be the map in Lemma \ref{lem 1}. Then we will construct the outside point. Firstly, for $k=1$, let $\lambda(1)=1$ and
  \[ T_1 = -\frac{ \TTn{m_{\pi(1),1}}{M_{\pi(1),1}}{u_{\pi(1),1}}{U_{\pi(1),1}}{\pi(1)} }{ \norm{ \TTn{m_{\pi(1),1}}{M_{\pi(1),1}}{u_{\pi(1),1}}{U_{\pi(1),1}}{\pi(1)}}_{B(X)} } . \]
  Suppose that for all $k\leq N$, $T_k$ has been constructed. For $k=N+1$, let $\lambda(N+1)$ be the smallest integer such  that \[U_{\pi(N),\lambda(N)}<u_{\pi(N+1),\lambda(N+1)}\quad  \text{ and } \quad M_{\pi(N),\lambda(N)}<m_{\pi(N+1),\lambda(N+1)}.\]
  Then we define
  \[T_{N+1}=T_N-\frac{ \TTn{m_{\pi(N+1),\lambda(N+1)}}{M_{\pi(N+1),\lambda(N+1)}}{u_{\pi(N+1),\lambda(N+1)}}{U_{\pi(N+1),\lambda(N+1)}}{\pi(N+1)} }{ \norm{\TTn{m_{\pi(N+1),\lambda(N+1)}}{M_{\pi(N+1),\lambda(N+1)}}{u_{\pi(N+1),\lambda(N+1)}}{U_{\pi(N+1),\lambda(N+1)}}{\pi(N+1)}}_{B(X)} } . \]
  Thus we get an operator sequence $\{T_n\}_{n=1}^\infty$ which satisfies $\|T_n\|_{B(X)}=1$ and pointwisely converges to \[T_0=-\sum_{n=1}^{\infty}\frac{ \TTn{m_{\pi(n),\lambda(n)}}{M_{\pi(n),\lambda(n)}}{u_{\pi(n),\lambda(n)}}{U_{\pi(n),\lambda(n)}}{\pi(n)} }{ \norm{ \TTn{m_{\pi(n),\lambda(n)}}{M_{\pi(n),\lambda(n)}}{u_{\pi(n),\lambda(n)}}{U_{\pi(n),\lambda(n)}}{\pi(n)} }_{B(X)} }.\]
  Since $X$ is a Banach space with an $1$-unconditional basis, we have
  \[\norm{T_0}_{B(X)}=1 \quad\text{ and  }\quad \norm{T_0}_{B(X)/K(X)}=1.\] Therefore $\norm{T_0}_\alpha=1$.
  For all $T(n)$, by Lemma \ref{lem 1}, there is a subsequence $\{\pi(k_s)\}_{s=1}^\infty$ such that  $\pi(k_s)$ is constant $n$. Since  $\TTn{m_{n,\lambda(k_s)}}{M_{n,\lambda(k_s)}}{u_{n,\lambda(k_s)}}{U_{n,\lambda(k_s)}}{n}$ is actually an operator from a finite-dimensional space to a finite-dimensional space, there is $x_s$ with $\|x_s\|=1$ which attains its operator norm.
  Thus we have
  \begin{align*}
    \norm{T_0-T(n)}_{B(X)/K(X)} &\geq  \lim_s \norm{\left(T_0-T(n) \right)x_s} \\
    &\geq \lim_s \norm{\left(T_0-\TTn{m_{n,\lambda(k_s)}}{M_{n,\lambda(k_s)}}{u_{n,\lambda(k_s)}}{U_{n,\lambda(k_s)}}{n}\right)x_s}\\
    &=1+\norm{T(n)}_{B(X)/K(X)}.
  \end{align*}
Therefore
  \begin{align*}
    \norm{T(n)-T_0}_\alpha&= \alpha\norm{T(n)-T_0}_{B(X)}+(1-\alpha)\norm{T(n)-T_0}_{B(X)/K(X)} \\
    & \geq \alpha\norm{T(n)-T_0}_{B(X)} +(1 -\alpha) \left(1+\norm{T(n)}_{B(X)/K(X)}\right)\\
    &\geq \alpha\left(\|T(n)\|_{B(X)}-1\right)+(1 -\alpha) \left(1+\norm{T(n)}_{B(X)/K(X)}\right)\\
    &=\norm{T(n)}_\alpha + 1-2\alpha \\
    &\geq \norm{T(n)}_\alpha.
  \end{align*}
  This inequality shows that  $$T_0 \notin \bigcup_{n=1}^\infty B(T(n),r_n).$$
  Thus $X_{\alpha}=(B(X), \|\cdot\|_{\alpha})$ fails the BCP.
\end{proof}

For all $1 \leq p<\infty$, the canonical Schauder basis of $\ell_p$ and $c_0$ is 1-unconditional. Particularly, when $p=2$ we know $B(\ell^2)/K(\ell^2)$ is the Calkin algebra on the separable Hilbert space. Thus we have the following corollaries.


\begin{cor}
   $B(\ell^p)/K(\ell^p)$ and $B(c_0)/K(c_0)$ fail the BCP.
\end{cor}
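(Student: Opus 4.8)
To prove the corollary, the plan is to apply Theorem \ref{main1} in the degenerate case $\alpha = 0$. When $\alpha = 0$ the functional $\norm{\cdot}_0 = \norm{\cdot}_{B(X)/K(X)}$ is a seminorm on $B(X)$ whose kernel is exactly $K(X)$, so passing to the quotient by this kernel identifies $(B(X), \norm{\cdot}_0)$ with the Banach space $B(X)/K(X)$; consequently a putative ball-covering of $S_{B(X)/K(X)}$ lifts, by choosing representatives, to a family of balls in $B(X)$ covering $\{T \in B(X) : \norm{T}_0 = 1\}$ and bounded away from $0$ in the $\norm{\cdot}_0$-seminorm. The construction in the proof of Theorem \ref{main1} then goes through verbatim with $\alpha = 0$: starting from such a covering $\{B(\tilde T(n), r_n)\}_{n=1}^\infty$ with $0 < r_n \le \norm{\tilde T(n)}_{B(X)/K(X)}$, one applies Lemma \ref{lem 2} to each class $\tilde T(n)$ and uses the enumeration map $\pi$ of Lemma \ref{lem 1} to build $T_0 \in B(X)$ with $\norm{T_0}_{B(X)/K(X)} = 1$ and $\norm{T_0 - \tilde T(n)}_{B(X)/K(X)} \ge 1 + \norm{\tilde T(n)}_{B(X)/K(X)} > r_n$ for every $n$, so that the class of $T_0$ lies on $S_{B(X)/K(X)}$ but in none of the balls, a contradiction. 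Hence $B(X)/K(X)$ fails the BCP whenever $X$ is a Banach space admitting a $1$-unconditional basis.

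With this in hand the corollary is immediate. The canonical Schauder basis of $\ell^p$ for $1 \le p < \infty$ and of $c_0$ is $1$-unconditional, as recorded just above the statement, and Theorem \ref{main1} asks only for $1$-unconditionality and not for shrinkingness, so the case $p = 1$ is covered even though the unit vector basis of $\ell^1$ is not shrinking. Taking $X = \ell^p$ and $X = c_0$ in the previous paragraph therefore shows that $B(\ell^p)/K(\ell^p)$ for $1 \le p < \infty$ and $B(c_0)/K(c_0)$ fail the BCP; specializing to $p = 2$ and recalling that $\ell^2$ is a separable infinite-dimensional Hilbert space $H$ recovers the fact that the Calkin algebra $B(H)/K(H)$ fails the BCP, which is the promised negative answer to Question \ref{question1}.

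Since this is essentially a specialization of Theorem \ref{main1}, there is no genuinely difficult step. The one point that calls for a little care is the reduction from ``$X_0$ fails the BCP'', a statement about a seminormed space, to ``$B(X)/K(X)$ fails the BCP'': one has to check that the degeneracy of $\norm{\cdot}_0$ at $\alpha = 0$ is harmless. It is, because the operator $T_0$ produced in the proof of Theorem \ref{main1} is an honest element of $B(X)$ and all the estimates employed are already phrased in terms of $\norm{\cdot}_{B(X)/K(X)}$, so they descend to the quotient unchanged.
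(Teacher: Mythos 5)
Your proposal is correct and follows essentially the same route as the paper: the corollary is obtained by specializing Theorem \ref{main1} at $\alpha=0$ (where only $1$-unconditionality, not shrinkingness, is needed) to the canonical bases of $\ell^p$ and $c_0$. Your explicit reduction from the seminormed space $X_0$ to the genuine quotient $B(X)/K(X)$ is a point the paper leaves implicit, and you handle it correctly.
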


\begin{cor}
 Let $H$ be an infinite-dimensional separable Hilbert space, then the Calkin algebra $B(H)/K(H)$ fails the BCP.
\end{cor}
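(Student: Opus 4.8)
The plan is to deduce the statement from Theorem~\ref{main1} (equivalently, from the preceding corollary) by reducing to the model space $\ell^2$. First I would recall that an infinite-dimensional separable Hilbert space $H$ carries a countable orthonormal basis and hence admits a surjective linear isometry $U\colon H\to\ell^2$; then $T\mapsto UTU^{-1}$ is an isometric isomorphism of Banach algebras $B(H)\to B(\ell^2)$ that sends $K(H)$ onto $K(\ell^2)$, so it descends to an isometric isomorphism of the quotients $B(H)/K(H)\cong B(\ell^2)/K(\ell^2)$. Since the BCP is obviously preserved under isometric isomorphisms, it suffices to prove that $B(\ell^2)/K(\ell^2)$ fails the BCP; and this is precisely the case $p=2$ of the previous corollary.

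To see the latter directly, I would apply Theorem~\ref{main1} with $X=\ell^2$: the canonical basis of $\ell^2$ is $1$-unconditional, so for $\alpha=0$ the space $(B(\ell^2),\|\cdot\|_0)$ fails the BCP, where $\|\cdot\|_0=\|\cdot\|_{B(\ell^2)/K(\ell^2)}$ is the quotient seminorm pulled back to $B(\ell^2)$. This seminorm is constant on each coset of $K(\ell^2)$, so the quotient map $q\colon B(\ell^2)\to B(\ell^2)/K(\ell^2)$ carries every countable covering of the unit sphere of $(B(\ell^2),\|\cdot\|_0)$ by balls off the origin onto such a covering of the unit sphere of $B(\ell^2)/K(\ell^2)$, and conversely; hence $B(\ell^2)/K(\ell^2)$ inherits the failure of the BCP, which is what we want.

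I do not expect any genuine obstacle. The only point requiring (minor) attention is the translation of the $\alpha=0$ instance of Theorem~\ref{main1}, which is literally phrased for the seminormed space $(B(X),\|\cdot\|_0)$, into a statement about the honest quotient Banach space $B(X)/K(X)$; this is handled once and for all by the elementary remark that a seminorm is constant along the cosets of its kernel, so that a seminormed space and its associated normed space have the BCP simultaneously.
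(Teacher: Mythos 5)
Your proposal is correct and follows essentially the same route as the paper: the corollary is obtained by identifying $B(H)/K(H)$ with $B(\ell^2)/K(\ell^2)$ via a unitary and invoking the $\alpha\le 1/2$ (in particular $\alpha=0$) case of Theorem~\ref{main1} for the $1$-unconditional canonical basis of $\ell^2$. The two details you single out --- that a surjective isometry preserves the BCP, and that the seminormed space $(B(X),\|\cdot\|_0)$ and the quotient $B(X)/K(X)$ have the BCP simultaneously because the seminorm is constant on cosets of $K(X)$ --- are exactly the routine points the paper leaves implicit, and your handling of them is sound.
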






Next we will consider when the renormed space $X_\alpha$ has the BCP. 

\begin{thm} \label{BCP1}
  Let $(e_n)_{n=1}^\infty$ be an $1$-unconditional basis for a Banach space $X$ with biorthogonal functionals $(e_n^*)_{n=1}^\infty$ and $X^*$ be separable. If $1/2<\alpha \leq 1$, then the renormed space $X_{\alpha}=(B(X), \|\cdot\|_{\alpha})$ has the BCP.
\end{thm}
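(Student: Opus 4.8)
The plan is to cover $S_{X_\alpha}$ by countably many balls of the form $B(C,\|C\|_\alpha)$ with $C$ of finite rank; such balls automatically avoid the origin, since $\|0-C\|_\alpha=\|C\|_\alpha$ is exactly the radius. So it suffices to produce a countable family $\mathcal C$ of finite--rank operators such that every $T$ with $\|T\|_\alpha=1$ satisfies $\|T-C\|_\alpha<\|C\|_\alpha$ for some $C\in\mathcal C$. Write $\beta=\|T\|_{B(X)}$ and $\gamma=\|T\|_{B(X)/K(X)}$, so that $\alpha\beta+(1-\alpha)\gamma=1$ and $\gamma\le\beta$, whence $\gamma\le1\le\beta\le1/\alpha$. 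The argument hinges on the following \emph{approximation claim}: for every $\varepsilon>0$ there is a finite--rank operator $C$ with $\|C\|_{B(X)}\ge 3/\alpha$ and $\|T-C\|_{B(X)}\le\|C\|_{B(X)}-\beta+\varepsilon$, and $C$ may be taken from a fixed countable $\|\cdot\|_{B(X)}$--dense family $\mathcal C$. Granting it, since $C$ is compact we have $\|C\|_\alpha=\alpha\|C\|_{B(X)}$ and $\|T-C\|_{B(X)/K(X)}=\|T\|_{B(X)/K(X)}=\gamma$, so
\[\|T-C\|_\alpha=\alpha\|T-C\|_{B(X)}+(1-\alpha)\gamma\le\alpha\bigl(\|C\|_{B(X)}-\beta+\varepsilon\bigr)+(1-\alpha)\gamma,\]
which is $<\alpha\|C\|_{B(X)}=\|C\|_\alpha$ precisely when $(1-\alpha)\gamma+\alpha\varepsilon<\alpha\beta=1-(1-\alpha)\gamma$, i.e. $2(1-\alpha)\gamma+\alpha\varepsilon<1$. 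Since $\gamma\le1$ and $\alpha>1/2$ we have $2(1-\alpha)\gamma\le2(1-\alpha)<1$, so this holds once $\varepsilon=\varepsilon(\alpha)$ is small, with uniform slack; a small $\|\cdot\|_{B(X)}$--perturbation of $C$ inside $\mathcal C$ preserves the strict inequality. This is exactly where $\alpha>1/2$ is used: for $\alpha=1/2$ the condition degenerates to $\gamma<1$, which fails for the operators with $\beta=\gamma=1$ — precisely those exploited in Theorem~\ref{main1}.

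To build $C$ I would use the shrinking $1$--unconditional basis. Since $\|T\|_{B(X)}=\sup_{N,N'}\|P_{1,N}TP_{1,N'}\|$, choose $N,N'$ with $\|S\|>\beta-\varepsilon$ for $S:=P_{1,N}TP_{1,N'}$, a unit vector $u\in P_{1,N'}X$ with $\|Su\|=\|S\|$, set $w:=Su=P_{1,N}Tu\in P_{1,N}X$ (so $\|w\|>\beta-\varepsilon$), and pick $f\in\operatorname{span}\{e_1^*,\dots,e_{N'}^*\}$ a Hahn--Banach norming functional of $u$ with $\|f\|=1$ (available since $\|P_{1,N'}\|\le1$). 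Put
\[C:=S+\bigl(M-\|w\|\bigr)\,f\otimes\tfrac{w}{\|w\|},\qquad M\ge 3/\alpha.\]
Then $Cu=M\,\tfrac{w}{\|w\|}$, so $\|C\|_{B(X)}\ge M$, while $\|Cz\|\le\|Sz\|+(M-\|w\|)|f(z)|\le M+(\beta-\|w\|)\le M+\varepsilon$; moreover $C=P_{1,N}CP_{1,N'}$ is finite rank, and as $N,N',M,T$ vary these $C$'s lie in the separable set $\bigcup_k\{R:R=P_{1,k}RP_{1,k},\ \|R\|_{B(X)}\le 3/\alpha+1\}$, so a countable $\|\cdot\|_{B(X)}$--dense subfamily $\mathcal C$ of it works. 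For the norm estimate, split $T-C$ by the range projections $P_{1,N}$ and $P_{N+1,\infty}$: since $\operatorname{ran}S$ and $\operatorname{ran}(f\otimes w)$ lie in $P_{1,N}X$ one gets $P_{N+1,\infty}(T-C)=P_{N+1,\infty}T$, of norm $\le\beta$, and $P_{1,N}(T-C)=P_{1,N}TP_{N'+1,\infty}-\bigl(M-\|w\|\bigr)f\otimes\tfrac{w}{\|w\|}$, of norm $\le\|P_{1,N}TP_{N'+1,\infty}\|+(M-\|w\|)$. Because $(e_n^*)$ is a basis of $X^*$ (shrinking basis), $P_{1,N}TP_{N'+1,\infty}=(P_{1,N}T)(\mathrm{id}-P_{1,N'})$ is a finite--rank operator composed with $\mathrm{id}-P_{1,N'}$, hence tends to $0$ in operator norm as $N'\to\infty$; choosing $N'$ accordingly bounds this summand by $M-\beta+2\varepsilon$.

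The last step is also the main obstacle. The decomposition $T-C=P_{1,N}(T-C)+P_{N+1,\infty}(T-C)$ is a sum of operators with \emph{disjoint ranges}, and for a general $1$--unconditional basis the norm of such a sum is controlled only by the triangle inequality, so a priori the two pieces — of sizes $\approx M-\beta$ and $\le\beta$ — could combine to $\approx M$ instead of to the maximum $M-\beta$, which would make the approximation claim fail. For spaces whose operator norm is coordinatewise this does not happen: when $\|T\|_{B(c_0)}=\sup_n\|e_n^*\circ T\|$ or $\|T\|_{B(\ell^1)}=\sup_m\|Te_m\|$, one should instead inflate a single near--optimal row (resp. column) of $S$, so that the "$M-\beta$" error and the "$\le\beta$" error occupy different coordinates and the norm takes the max; and for uniformly convex $X$ (e.g. $\ell^p$, $1<p<\infty$) the rank--one choice $C=M\,f\otimes\tfrac{w}{\|w\|}$ already suffices, the modulus of convexity forcing the vectors $z$ that could spoil $\|T-C\|_{B(X)}\le M-\beta+o(1)$ to be nearly parallel to $u$. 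To cover a general shrinking $1$--unconditional basis I expect one must first use Lemma~\ref{lem 2} to replace $T$, up to arbitrarily small error in $\|\cdot\|_\alpha$, by an operator that decomposes into finite blocks marching off to infinity, and then inflate within one such block; carrying this out while keeping every center inside one fixed countable set is the technical heart of the proof.
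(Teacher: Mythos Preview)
The gap you flag in your last paragraph is genuine, and your proposed workarounds (row/column inflation for $c_0,\ell^1$, uniform convexity for $\ell^p$, or an appeal to Lemma~\ref{lem 2}) do not cover the general $1$-unconditional case. The paper's proof bypasses the obstacle entirely by choosing $C$ in a different and simpler way, and it is worth seeing why.

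The point you are missing is that $1$-unconditionality lets you \emph{flip the signs of finitely many range-coordinates of $T$ without changing the operator norm}. Concretely, if for a given $T$ with $\|T\|_{B(X)}=\beta$ you take
\[
C\;\approx\;2\sum_{n=1}^{K}e_n^*T\otimes e_n
\]
(approximating each $e_n^*T$ from a countable dense set in the unit ball of $X^*$ and normalising so that $\|C\|_{B(X)}=2$), then
\[
T-C\;\approx\;-\sum_{n=1}^{K}e_n^*T\otimes e_n+\sum_{n>K}e_n^*T\otimes e_n,
\]
i.e.\ $T-C$ is obtained from $T=\sum_n e_n^*T\otimes e_n$ by multiplying the first $K$ range-coefficients by $-1$. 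By $1$-unconditionality this has exactly the same $B(X)$-norm as $T$, so $\|T-C\|_{B(X)}\approx\beta$ rather than the $\|C\|_{B(X)}-\beta$ you were aiming for. That sounds worse, but now $\|C\|_{B(X)}=2$ is fixed and
\[
\|T-C\|_\alpha\;\approx\;\alpha\beta+(1-\alpha)\gamma\;=\;1\;<\;2\alpha\;=\;\|C\|_\alpha,
\]
which is precisely the condition $\alpha>1/2$. No splitting into range-disjoint pieces is needed, and no control beyond the max of the scalar multipliers $\{-1,1\}$ is required. Your construction, by contrast, pushes $\|C\|$ to a large $M$ and then has to add a tail of size $\beta$ to a head of size $M-\beta$; for a general $1$-unconditional basis that addition loses everything, as you observed.

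So the fix is not more machinery but a different choice of centre: double the truncated row-expansion of $T$ instead of inflating a single near-extremal direction. Once you see this, the countable family is immediate (finitely many $f_i$'s from a dense set in $B_{X^*}$, finitely many $e_i$'s), and the $\varepsilon$-bookkeeping is routine.
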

\begin{proof}
  We first show that if $\alpha=1$ then $B(X)=X_1$ has the BCP and all BCP points can be chosen in $K(X)$.
  Since $X^*$ is separable, let $\mathcal{A}=\{x_n^*\}_{n=1}^\infty$ be the countable dense subset of the unit ball of $X^*$. For all $T \in B(X)$ with $\|T\|=1$ and $x \in X$, we have
  \[Tx=\sum_{n=1}^{\infty}e_n^*T\otimes e_n (x).\]
  Since $\{(e_n,e_n^*)\}_{n=1}^\infty$ is 1-unconditional, for all $K=1,2,\cdots$ and for all $x \in X$,
  we have
  \[\norm{\sum_{n=1}^K e^*_n T \otimes e_n (x)} \leq \norm{\sum_{n=1}^{K+1} e^*_n T \otimes e_n (x)} \leq \|T x\|.\]
  Therefore
  \[\norm{\sum_{n=1}^{K} e_n^*T\otimes e_n} \leq \norm{\sum_{n=1}^{K+1} e_n^*T\otimes e_n} \leq \|T\|=1.\]
  Note that the monotone bounded series $\left\{\norm{\sum_{n=1}^{K} e^*_nT \otimes e_n}\right\}_{K=1}^\infty$ must have limit and the limit is precisely 1.
  For all $0<\varepsilon_3<1$, let $0<\varepsilon_1<\min(\varepsilon_3/8,1/4)$,  then there is a large enough $K$ such that
  \[\|T\|-\varepsilon_1=1-\varepsilon_1 \leq \norm{\sum_{n=1}^{K}e_n^*T\otimes e_n} \leq 1=\|T\|.\]
  For all $n=1,2,\cdots,K$ and $0<\varepsilon_2<\min(1/4K,\varepsilon_3/16K)$, there exists $x^*_{m_n} \in \{x_i^*\}_{i=1}^\infty$, $m_n\in \N_+$ such that
  $$\norm{x^*_{m_n}-e_n^* T} \leq \varepsilon_2.$$
  Thus by triangular inequality, we have \[\norm{\sum_{n=1}^{K}e_n^*T\otimes e_n -\sum_{n=1}^{K}x^*_{m_n}\otimes e_n}\leq K\varepsilon_2\] and
  \[\|T\|-\varepsilon_1-K\varepsilon_2=1-\varepsilon_1-K\varepsilon_2 \leq\norm{\sum_{n=1}^{K} x_{m_n}^*\otimes e_n} \leq 1+K\varepsilon_2=\|T\|+K\varepsilon_2.\] 
  
  Now we show that the countable set 
  \[\left\{\frac{2\sum_{i=1}^{K} f_i\otimes e_i}{\norm{\sum_{i=1}^{K} f_{i}\otimes e_i}}: K \in \N_+, f_i\in\mathcal{A}\ (1\leq i\leq K)\right\}\] is a set of BCP points for $B(X)$.
  
  Since $\{(e_n,e_n^*)\}_{n=1}^\infty$ is 1-unconditional, we have 
  \begin{align}
    &\quad\norm{T - \frac{2\sum_{n=1}^{K} x_{m_n}^*\otimes e_n}{\norm{\sum_{n=1}^{K} x_{m_n}^*\otimes e_n}}}\notag\\
    &=\norm{\sum_{n=1}^K \left(e_n^* T-\frac{2 x_{m_n}^*}{\norm{\sum_{n=1}^{K} x_{m_n}^*\otimes e_n}} \right)\otimes e_n +\sum_{n=K+1}^\infty e_n^* T\otimes e_n }\notag\\
    &\leq \norm{\sum_{n=1}^K \left(\frac{2 e_n^* T}{\norm{\sum_{n=1}^{K} x_{m_n}^*\otimes e_n}} \right)\otimes e_n -\sum_{n=1}^{K}\left(\frac{2 x_{m_n}^*}{\norm{\sum_{n=1}^{K} x_{m_n}^*\otimes e_n}} \right)\otimes e_n }\notag\\
    &\quad +\norm{\left(1-\frac{2}{\norm{\sum_{n=1}^{K} x_{m_n}^*\otimes e_n}} \right)\sum_{n=1}^K e_n^* T\otimes e_n+\sum_{n=K+1}^\infty e_n^* T\otimes e_n } \notag\\
    & \leq \frac{2K\varepsilon_2}{1-\varepsilon_1-K\varepsilon_2}+\max\left(\left|1-\frac{2}{\norm{\sum_{n=1}^{K} x_{m_n}^*\otimes e_n}} \right| ,1\right) \notag \\
    &\leq \frac{2K\varepsilon_2}{1-\varepsilon_1-K\varepsilon_2}+\frac{2}{1-\varepsilon_1-K\varepsilon_2}-1\notag\\
    &=1+\frac{2\varepsilon_1+4K\varepsilon_2}{1-\varepsilon_1-K\varepsilon_2}\notag\\
    &\leq 1+2\left(\frac{\varepsilon_3}{4}+\frac{\varepsilon_3}{4}\right)\notag\\
    &=1+\varepsilon_3 \notag\\
    &<2.\notag
  \end{align}

  Next we assume $1/2<\alpha < 1$. For all $T \in X_\alpha$ with $\|T\|_\alpha=1$, we have $\|T\|_{B(X)}\geq 1$ and $\|T\|_{B(X)/K(X)} \leq 1$. We will show that the countable set \[\left\{\frac{2\sum_{i=1}^{K} f_i\otimes e_i}{\norm{\sum_{i=1}^{K} f_i\otimes e_i}_{B(X)}}: K\in \N_+, f_i\in\mathcal{A}\ (1\leq i\leq K)\right\}\] is a set of BCP points for $X_\alpha$ $(1/2<\alpha\leq 1)$ and the radius of balls is $2\alpha$.
  
  Obviously, $$ \norm{2\left(\norm{\sum_{n=1}^{K} x_{m_n}^*\otimes e_n}_{B(X)}\right)^{-1}\sum_{n=1}^{K} x_{m_n}^*\otimes e_n }_\alpha =2 \alpha >1.$$ For all \[0<\varepsilon_3<2\alpha-1,\] let $\varepsilon_1, \varepsilon_2, K$ and $\{x^*_{m_n}\}_{n=1}^\infty$ be chosen the same as before. Since $\{(e_n,e_n^*)\}_{n=1}^\infty$ is 1-unconditional, we have
  \begin{align*}
    &\quad\norm{T - \frac{2\sum_{n=1}^{K} x_{m_n}^*\otimes e_n}{\norm{\sum_{n=1}^{K} x_{m_n}^*\otimes e_n}_{B(X)}}}_\alpha\\
    &=\alpha\norm{T - \frac{2\sum_{n=1}^{K} x_{m_n}^*\otimes e_n}{\norm{\sum_{n=1}^{K} x_{m_n}^*\otimes e_n}_{B(X)}}}_{B(X)}\\
    &\quad +(1-\alpha)\norm{T - \frac{2\sum_{n=1}^{K} x_{m_n}^*\otimes e_n}{\norm{\sum_{n=1}^{K} x_{m_n}^*\otimes e_n}_{B(X)}}}_{B(X)/ K(X)}\\
    &=\alpha\norm{\sum_{n=1}^K \left(e_n^* T-\frac{2 x_{m_n}^*}{\norm{\sum_{n=1}^{K} x_{m_n}^*\otimes e_n}_{B(X)}} \right)\otimes e_n +\sum_{n=K+1}^\infty e_n^* T\otimes e_n }_{B(X)}\\
    &\quad +(1-\alpha)\norm{T - \frac{2\sum_{n=1}^{K} x_{m_n}^*\otimes e_n}{\norm{\sum_{n=1}^{K} x_{m_n}^*\otimes e_n}_{B(X)}}}_{B(X)/ K(X)}\\
    &\leq \alpha\norm{\sum_{n=1}^K \left(\frac{2 e_n^* T}{\norm{\sum_{n=1}^{K} x_{m_n}^*\otimes e_n}_{B(X)}}-\frac{2 x_{m_n}^*}{\norm{\sum_{n=1}^{K} x_{m_n}^*\otimes e_n}_{B(X)}} \right)\otimes e_n }_{B(X)}\\
    &\quad +\alpha\norm{\left(1-\frac{2}{\norm{\sum_{n=1}^{K} x_{m_n}^*\otimes e_n}_{B(X)}} \right)\sum_{n=1}^K e_n^* T\otimes e_n+\sum_{n=K+1}^\infty e_n^* T\otimes e_n }_{B(X)}\\
    &\quad +(1-\alpha)\norm{T - \frac{2\sum_{n=1}^{K} x_{m_n}^*\otimes e_n}{\norm{\sum_{n=1}^{K} x_{m_n}^*\otimes e_n}_{B(X)}}}_{B(X)/K(X)}\\
    &\leq \frac{2K\alpha\varepsilon_2}{\norm{T}_{B(X)}-\varepsilon_1-K\varepsilon_2}+\alpha\left|1-\frac{2}{1-\varepsilon_1-K\varepsilon_2}\right|\|T\|_{B(X)}+(1-\alpha)\norm{T}_{B(X)/K(X)}\\
    &\leq 1 +  \frac{2K\alpha\varepsilon_2}{1-\varepsilon_1-K\varepsilon_2}+\alpha\|T\|_{B(X)}\left(\frac{2}{1-\varepsilon_1-K\varepsilon_2}-2\right)\\
    &\leq 1+\frac{2\varepsilon_1+2(1+\alpha) K \varepsilon_2}{1-\varepsilon_1-K\varepsilon_2}\\
    &\leq 1+\varepsilon_3\\
    &<2\alpha\\
    &=\norm{\frac{2\sum_{n=1}^{K} x_{m_n}^*\otimes e_n}{\norm{\sum_{n=1}^{K} x_{m_n}^*\otimes e_n}_{B(X)}}}_{\alpha}.
  \end{align*}
  This finishes the proof.
\end{proof}


Combining Theorem \ref{main1} with Theorem \ref{BCP1}, we obtain the main result of this section.
\begin{thm}
  Let $X$ be a Banach space with a shrinking $1$-unconditional basis $(e_n)_{n=1}^\infty$. Then the renormed space $X_{\alpha}=(B(X), \|\cdot\|_{\alpha})$ has the BCP if and only if $\alpha > 1/2$.
\end{thm}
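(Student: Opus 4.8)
The plan is to derive the final theorem as an immediate consequence of the two preceding theorems, so the only real content is verifying that the hypotheses of each applies to each direction of the biconditional. Let $X$ be a Banach space with a shrinking $1$-unconditional basis $(e_n)_{n=1}^\infty$, and recall that $X_\alpha = (B(X), \|\cdot\|_\alpha)$ where $\|\cdot\|_\alpha = \alpha\|\cdot\|_{B(X)} + (1-\alpha)\|\cdot\|_{B(X)/K(X)}$. First I would split into the two cases $\alpha \leq 1/2$ and $\alpha > 1/2$.

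For the case $0 \le \alpha \le 1/2$: here $(e_n)_{n=1}^\infty$ is in particular an $1$-unconditional basis with biorthogonal functionals $(e_n^*)_{n=1}^\infty$, so Theorem \ref{main1} applies directly and yields that $X_\alpha$ fails the BCP. Note that shrinkingness is not even needed for this direction --- Theorem \ref{main1} only uses $1$-unconditionality --- so this case is completely routine.

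For the case $1/2 < \alpha \le 1$: I would invoke Theorem \ref{BCP1}, whose hypotheses are that $(e_n)_{n=1}^\infty$ is an $1$-unconditional basis with biorthogonal functionals $(e_n^*)_{n=1}^\infty$, that $X^*$ is separable, and that $1/2 < \alpha \le 1$. The first and third are given; the only point requiring a word of justification is the separability of $X^*$. This follows from the standard fact that a Banach space with a shrinking Schauder basis has separable dual: when $(e_n)_{n=1}^\infty$ is shrinking, the biorthogonal functionals $(e_n^*)_{n=1}^\infty$ form a Schauder basis of $X^*$ (this is exactly the definition of shrinking recalled in the excerpt), and a Banach space with a Schauder basis is separable since the rational linear combinations of the basis vectors are dense. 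Hence $X^*$ is separable and Theorem \ref{BCP1} gives that $X_\alpha$ has the BCP.

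There is no real obstacle here: both hard directions were carried out in Theorems \ref{main1} and \ref{BCP1}, and the final statement is just the bookkeeping that assembles them, with the single observation that "shrinking $1$-unconditional basis" simultaneously supplies the $1$-unconditionality needed for both theorems and the separability of $X^*$ needed for Theorem \ref{BCP1}. The one thing to be careful about is matching the endpoint conventions ($\alpha = 1/2$ belongs to the failing case via Theorem \ref{main1}, and $\alpha = 1$ belongs to the BCP case via Theorem \ref{BCP1}), so that the dichotomy "$\alpha > 1/2$" is sharp and exhaustive over $[0,1]$.
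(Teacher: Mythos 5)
Your proposal is correct and follows exactly the same route as the paper: the $0\leq\alpha\leq 1/2$ direction is Theorem \ref{main1} verbatim, and the $1/2<\alpha\leq 1$ direction invokes Theorem \ref{BCP1} after observing that shrinkingness makes $(e_n^*)_{n=1}^\infty$ a basis of $X^*$, hence $X^*$ is separable. Nothing is missing.
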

\begin{proof}
Necessity. Fix any $0\leq \alpha\leq1/2$, by Theorem \ref{main1}, the renormed space $X_\alpha$ does not have the BCP.

Sufficiency. Assume $1/2<\alpha\leq1$. Since $(e_n)_{n=1}^\infty$ is shrinking, the coordinate functionals $(e_{n}^{\ast})_{n=1}^{\infty}$ are a basis for $X^\ast$, this implies that $X^\ast$ is separable. Then by Theorem \ref{BCP1}, the renormed space $X_\alpha$ has the BCP.
\end{proof}

For all $1 < p<\infty$, the canonical Schauder basis of $\ell_p$ is 1-unconditional and its dual space $l^q$ where $p^{-1}+q^{-1}=1$ is separable. The canonical Schauder basis of $c_0$ is also 1-unconditional and its dual space $l^1$ is also separable. Thus we have the following corollary.

\begin{cor}
  $\left(B(\ell^p), \|\cdot\|_\alpha\right)$ $(1<p<\infty)$ and $\left(B(c_0), \|\cdot\|_\alpha\right)$ have the BCP if and only if $\alpha > 1/2$.
\end{cor}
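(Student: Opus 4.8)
The plan is to obtain the equivalence directly by combining the two preceding results, Theorem~\ref{main1} and Theorem~\ref{BCP1}, once one observes that the hypothesis ``shrinking $1$-unconditional basis'' is tailored to feed both of them.

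For the necessity, suppose $0\le\alpha\le 1/2$. A shrinking $1$-unconditional basis is in particular a $1$-unconditional basis with biorthogonal functionals $(e_n^*)_{n=1}^\infty$, so the hypotheses of Theorem~\ref{main1} are met and that theorem gives that $X_\alpha=(B(X),\|\cdot\|_\alpha)$ fails the BCP. Contrapositively, if $X_\alpha$ has the BCP then $\alpha>1/2$. For the sufficiency, suppose $1/2<\alpha\le 1$. By definition of a shrinking basis the coordinate functionals $(e_n^*)_{n=1}^\infty$ form a Schauder basis of $X^*$, hence finite linear combinations of them with rational coefficients are dense and $X^*$ is separable. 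Thus $X$ is a Banach space with a $1$-unconditional basis whose dual is separable, which is exactly the hypothesis of Theorem~\ref{BCP1}; applying it in the range $1/2<\alpha\le 1$ yields that $X_\alpha$ has the BCP. Together the two implications give the stated equivalence.

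There is essentially no new obstacle in this final step: it is a matter of checking that ``shrinking'' is invoked only to secure separability of $X^*$ (the one extra ingredient the sufficiency direction of Theorem~\ref{BCP1} needs), while the necessity direction of Theorem~\ref{main1} uses nothing beyond $1$-unconditionality. All the genuine content has already been carried out upstream: the block approximation $\|T\|_{B(X)/K(X)}=\lim_i\|\TT{w_i}{W_i}{v_i}{V_i}\|_{B(X)}$ of Lemma~\ref{lem 2}, the recursive construction in Theorem~\ref{main1} of an outside operator $T_0$ with $\|T_0\|_\alpha=1$ and $\|T(n)-T_0\|_\alpha\ge\|T(n)\|_\alpha+1-2\alpha$, and the explicit countable family of BCP points built from a countable dense set of functionals (with radii $2\alpha$) in Theorem~\ref{BCP1}.
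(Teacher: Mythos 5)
Your argument is correct and is exactly the paper's route: Theorem~\ref{main1} gives failure of the BCP for $0\le\alpha\le 1/2$ using only $1$-unconditionality, and Theorem~\ref{BCP1} gives the BCP for $1/2<\alpha\le 1$ once separability of $X^*$ is extracted from the shrinking hypothesis. The only point left implicit is the (standard) specialization that the statement actually asks for: the canonical unit vector bases of $\ell^p$ ($1<p<\infty$) and of $c_0$ are $1$-unconditional and shrinking, their duals being $\ell^q$ and $\ell^1$ with the coordinate functionals as a basis, so the general theorem applies to these two families of spaces.
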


Since the Haar basis of $L^2[0,1]$ is 1-unconditional and its dual space is separable, we have the following result.

\begin{cor}
  $\left(B(L^2[0,1]), \|\cdot\|_\alpha\right)$ has the BCP if and only if $\alpha > 1/2$.
\end{cor}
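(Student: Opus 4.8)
The plan is to produce a concrete shrinking $1$-unconditional basis of $L^2[0,1]$ and then quote the theorem combining Theorem~\ref{main1} and Theorem~\ref{BCP1}. The natural choice is the $L^2$-normalized Haar system $(h_n)_{n=1}^\infty$ (equivalently, one could take any orthonormal basis of $L^2[0,1]$, e.g.\ the trigonometric system); it is classical that this is a Schauder basis of $L^2[0,1]$, so the only things to verify are that it is $1$-unconditional and that it is shrinking.

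For $1$-unconditionality I would use that the Haar system is orthonormal in $L^2[0,1]$: for scalars with $|a_n|\le|b_n|$ ($n=1,\dots,N$) one has $\norm{\sum_{n=1}^N a_n h_n}^2=\sum_{n=1}^N|a_n|^2\le\sum_{n=1}^N|b_n|^2=\norm{\sum_{n=1}^N b_n h_n}^2$, so the unconditional basis constant equals $1$. For shrinkingness I would invoke that $L^2[0,1]$ is reflexive and that every Schauder basis of a reflexive Banach space is shrinking; alternatively one argues directly, since under the Riesz identification $L^2[0,1]^\ast\cong L^2[0,1]$ the biorthogonal functionals $(h_n^\ast)$ again correspond to the orthonormal basis $(h_n)$, whose closed linear span is all of $L^2[0,1]^\ast$. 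Either way the coordinate functionals form a basis of the (separable) dual.

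With these two facts in hand, $X=L^2[0,1]$ satisfies the hypotheses of the main theorem: for $0\le\alpha\le1/2$ the failure of the BCP of $X_\alpha$ follows from Theorem~\ref{main1}, which uses only the $1$-unconditional basis; and for $1/2<\alpha\le1$ the BCP of $X_\alpha$ follows from Theorem~\ref{BCP1}, whose additional hypothesis — separability of $X^\ast$ — is exactly what shrinkingness supplies. Hence $\bigl(B(L^2[0,1]),\norm{\cdot}_\alpha\bigr)$ has the BCP if and only if $\alpha>1/2$. The argument is essentially a verification of hypotheses and presents no real obstacle; the one point needing a little care is to choose a basis of $L^2[0,1]$ that is simultaneously $1$-unconditional and shrinking (a generic Schauder basis of $L^2$ need be neither), which is why the orthonormal Haar system — automatically $1$-unconditional by orthonormality and automatically shrinking by reflexivity — is the right object to feed into the theorem.
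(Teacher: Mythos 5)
Your proposal is correct and matches the paper's argument: the paper likewise justifies the corollary by noting that the Haar basis of $L^2[0,1]$ is $1$-unconditional and that the dual is separable, and then invokes the combination of Theorem~\ref{main1} and Theorem~\ref{BCP1}. Your extra verifications (orthonormality gives $1$-unconditionality, reflexivity gives shrinkingness) are exactly the facts the paper leaves implicit.
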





\begin{rem}
Notice that the precondition in above results can not extend to the Banach spaces with monotone basis and considering the Banach spaces with $1$-unconditional basis is essential. In fact, there exist plenty of Banach spaces $X$ with a Schauder basis (and hence, after renorming, a monotone Schauder basis \cite{LT1977}) such that $B(X)/K(X)$ is separable and thus satisfies the BCP under any equivalent norm. The first example is the Argyros-Haydon space \cite{AH2011} where $B(X)/K(X)$ is one-dimensional. Other examples are given by M. Tarbard in \cite{T2012}. 
\end{rem}

\section{The ball-covering property of $B(L^p[0,1])$}


As mentioned before, $B(L^1[0,1])$ fails the BCP. In order to determine whether other $B(L^p[0,1])$ for $1<p<\infty$ has the BCP, we need the following lemma which shows the unconditional constant of the Haar basis of $L^p[0,1]$.

\begin{lem}[\cite{AK2011,C1992,M1998}] \label{unc}
  Let $1<p<\infty$ and $p^{-1}+q^{-1}=1$, then the Haar basis in $L^p[0,1]$ is an unconditional basis and the unconditional constant $K_u(p)$ is accurately $\max(p,q)-1$.
\end{lem}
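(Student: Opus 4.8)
That the Haar system is an unconditional basis of $L^p[0,1]$ for $1<p<\infty$ is classical (it follows, for instance, from the Paley--Marcinkiewicz multiplier theorem), so the real content of the lemma is the exact value $K_u(p)=\max(p,q)-1$; write $p^\ast=\max(p,q)$. The plan is to identify $K_u(p)$ with the norm of a sign martingale transform on $L^p$ and then apply Burkholder's sharp transform inequality in both directions. Two reductions come first. Writing $a_I=\theta_I b_I$, the definition of $K_u$ becomes $K_u(p)=\sup\{\norm{M_\theta}_{L^p\to L^p}:\ \theta=(\theta_I),\ |\theta_I|\le 1\}$, where $M_\theta$ multiplies the coefficient of the dyadic interval $I$ in the Haar expansion by $\theta_I$; for fixed $f$ the map $\theta\mapsto\norm{M_\theta f}_p$ is convex, so its supremum over the cube is attained at a vertex, and hence $K_u(p)=\sup\{\norm{M_\varepsilon}_{L^p\to L^p}:\ \varepsilon_I\in\{-1,1\}\}$. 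Moreover the Haar system is, up to normalisation, its own biorthogonal system and $(L^p)^\ast=L^q$, so $M_\varepsilon$ is self-adjoint for the $L^p$--$L^q$ pairing and $\norm{M_\varepsilon}_{L^p\to L^p}=\norm{M_\varepsilon}_{L^q\to L^q}$; thus $K_u(p)=K_u(q)$ and we may assume $p\ge 2$, so $p^\ast=p$ and the target value is $p-1$.

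Grouping the Haar coefficients by generation gives the dyadic martingale $f_n=\mathbb{E}[f\mid\mathcal{F}_n]$ of $f=\sum_I a_I h_I$ with respect to the dyadic filtration $(\mathcal{F}_n)_{n\ge0}$; since Haar functions of a fixed generation have pairwise disjoint supports, the corresponding partial sums $g_n$ of $g:=M_\varepsilon f$ satisfy $|dg_n|=|df_n|$ pointwise and $g_0=\pm f_0$. The upper bound $\norm{g}_p\le(p-1)\norm{f}_p$ is Burkholder's inequality, which I would establish by his Bellman function method: for $p\ge 2$ set
\[U(x,y)=p\Bigl(1-\tfrac1p\Bigr)^{p-1}\bigl(|y|-(p-1)|x|\bigr)\bigl(|x|+|y|\bigr)^{p-1},\qquad (x,y)\in\mathbb{R}^2,\]
and verify (i) $U(x,y)\ge|y|^p-(p-1)^p|x|^p$; (ii) $U(x,x)\le 0$ and $U(x,-x)\le 0$; (iii) the first-order diagonal concavity $U(x+h,y+k)\le U(x,y)+U_x(x,y)h+U_y(x,y)k$ whenever $|k|=|h|$. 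Applying (iii) with $h=df_n$ and $k=dg_n=\pm df_n$ and taking conditional expectation over $\mathcal{F}_{n-1}$ (the linear terms vanish because $\mathbb{E}[df_n\mid\mathcal{F}_{n-1}]=0$) gives $\mathbb{E}\,U(f_n,g_n)\le\mathbb{E}\,U(f_{n-1},g_{n-1})$; iterating down to $n=0$ and using (ii) yields $\mathbb{E}\,U(f_n,g_n)\le\mathbb{E}\,U(f_0,g_0)\le 0$, whence by (i) $\norm{g_n}_p^p\le(p-1)^p\norm{f_n}_p^p$. Letting $n\to\infty$ and recalling the reduction to sign multipliers, $K_u(p)\le p-1=p^\ast-1$.

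For the reverse inequality $K_u(p)\ge p^\ast-1$ (still $p\ge 2$) it suffices to exhibit, for every $\delta>0$, a finite collection of dyadic intervals, coefficients and signs with $\norm{M_\varepsilon f}_p\ge(p-1-\delta)\norm{f}_p$. Here I would use the classical near-extremal family: a finite dyadic martingale whose successive increments are $\pm s$ for a small step $s$, stopped once its absolute value crosses a fixed barrier, with the transform reversing the sign of each new increment; as the number of generations grows and $s\to 0$ the pair $(f_n,g_n)$ converges in law to Burkholder's extremal planar martingale, and a direct computation of the two $L^p$ norms shows their ratio tends to $p-1$. This pins $K_u(p)=p^\ast-1$ for $p\ge 2$, and the identity $K_u(p)=K_u(q)$ from the first paragraph transfers the value to $1<p<2$.

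The main obstacle is property (iii) of $U$, the diagonal first-order concavity estimate: it is the heart of Burkholder's method, it is precisely what forces both the algebraic form of $U$ and the constant $p-1$, and its verification is a nontrivial (if elementary) calculus check split according to the signs of $x$, $y$ and $h$. The estimate on the near-extremal family is a second, more computational, obstacle. Everything else — classical unconditionality of the Haar system, the passage to sign multipliers, the $L^p$--$L^q$ duality, and the telescoping of $\mathbb{E}\,U(f_n,g_n)$ — is routine.
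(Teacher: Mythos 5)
The paper offers no proof of Lemma \ref{unc}: it is quoted as Burkholder's theorem from the cited sources, so there is nothing in the text to compare your argument against. Your outline is precisely the standard proof from those references and is correct in structure: the convexity reduction from $|\theta_I|\le 1$ to signs, the formal self-adjointness of $M_\varepsilon$ with respect to the $L^p$--$L^q$ pairing giving $K_u(p)=K_u(q)$, the identification of sign multipliers with $\pm 1$-transforms of the dyadic martingale (using that Haar functions of a fixed generation have disjoint supports), Burkholder's function $U$ with properties (i)--(iii), and the telescoping of $\mathbb{E}\,U(f_n,g_n)$. Two points deserve care. First, $U$ is not $C^1$ across $\{y=0\}$ (the factor $|y|(|x|+|y|)^{p-1}$ has a corner), so (iii) as you state it, with classical partials $U_x,U_y$, is not quite well posed; for the dyadic filtration the increment on a generation-$(n-1)$ atom takes exactly the two values $\pm(h,k)$ with equal conditional probability, so what you actually need is the zigzag midpoint concavity $U(x+h,y+k)+U(x-h,y-k)\le 2U(x,y)$ for $|k|=|h|$, which holds without any differentiability and is the form in which Burkholder proves it. Second, the lower bound $K_u(p)\ge p^\ast-1$ is the historically substantive half of the theorem: it does not follow from soft considerations, and your random-walk-stopped-at-a-barrier discretization of the extremal planar martingale is a viable route but is exactly the computation that must be carried out (in the cited sources it is done either by explicit near-extremal Haar polynomials or by showing that no constant $c<p^\ast-1$ admits a biconcave majorant of $|y|^p-c^p|x|^p$ vanishing appropriately on the diagonal). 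With those two items supplied, your proposal is a faithful reconstruction of the proof the paper is citing.
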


\begin{lem}[\cite{AK2011,C1992,M1998}]\label{monotone}
  Haar basis is a monotone basis of $L^p[0,1]$ for all $1 \leq p <\infty$.
\end{lem}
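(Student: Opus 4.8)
The statement is classical; see \cite{AK2011,C1992,M1998}. The plan is to identify the partial-sum projections of the naturally ordered Haar system with conditional expectation operators and then to invoke the fact that conditional expectations are contractions on $L^p$ for every $1\le p<\infty$.

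First I would fix the Haar system on $[0,1]$: the constant function $\mathbf 1_{[0,1]}$ together with, for each $n\ge 0$ and each dyadic interval $I$ of length $2^{-n}$, the function $h_I$ equal to $+1$ on the left half of $I$, $-1$ on the right half, and $0$ elsewhere; order it level by level and, within a level, from left to right. For every initial segment $F$ of this ordered system, $\operatorname{span}\{F\}$ is exactly the space of functions measurable with respect to a finite $\sigma$-algebra $\mathcal G_F$ generated by a partition of $[0,1]$ into dyadic intervals: if $F$ runs through the end of level $n$ then the atoms of $\mathcal G_F$ are the dyadic intervals of length $2^{-(n+1)}$, while if $F$ stops after the first $j+1$ Haar functions of level $n$ then the atoms of $\mathcal G_F$ are the two halves of $[i2^{-n},(i+1)2^{-n}]$ for $i\le j$ together with the intervals $[i2^{-n},(i+1)2^{-n}]$ for $i>j$. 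In all cases $\mathcal G_F$ lies between the two consecutive dyadic $\sigma$-algebras.

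Next I would let $Q_F=\mathbb E[\,\cdot\,\mid\mathcal G_F]$; by Jensen's inequality $|\mathbb E[f\mid\mathcal G_F]|^p\le \mathbb E[|f|^p\mid\mathcal G_F]$ pointwise almost everywhere, so integrating gives $\norm{Q_Ff}_{L^p}\le\norm{f}_{L^p}$ for all $f\in L^p[0,1]$ and all $1\le p<\infty$. On $L^2[0,1]$ the operator $Q_F$ coincides with the orthogonal projection onto $\operatorname{span}\{F\}$, which, since the Haar functions are pairwise orthogonal, sends $h_I\mapsto h_I$ for the Haar functions listed in $F$ and $h_I\mapsto 0$ for the others. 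Hence $Q_F$ agrees with the Haar partial-sum map on the linear span of all Haar functions, and being a contraction it is the unique continuous extension of that map. Since the finite linear combinations of Haar functions are precisely the dyadic simple functions, which are dense in $L^p[0,1]$ for $1\le p<\infty$, this simultaneously shows that the Haar system is a Schauder basis of $L^p[0,1]$ and that every partial-sum projection has norm $1$; that is, the basis constant equals $1$ and the Haar basis is monotone.

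The only step that needs a little care is the description of $\operatorname{span}\{F\}$ when $F$ stops in the middle of a level, i.e. the observation that adjoining the first few Haar functions of level $n$ to the span of level $n-1$ merely splits the corresponding left-most dyadic intervals in half; once that is in place the rest is routine, and in any event the lemma is standard.
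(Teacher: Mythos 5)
The paper states this lemma without proof, simply citing \cite{AK2011,C1992,M1998}; your argument --- identifying each Haar partial-sum projection with the conditional expectation onto the $\sigma$-algebra generated by the corresponding partition into dyadic intervals, and then using the conditional Jensen inequality to see that these are $L^p$-contractions for all $1\le p<\infty$ --- is exactly the standard proof found in those references. It is correct, including the only delicate point (the description of $\operatorname{span}\{F\}$ when the initial segment stops mid-level), so there is nothing to add.
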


\begin{thm} \label{LpBCP}
  Let $3/2 < p < 3$, then $B(L^p[0,1])$ has the UBCP.
\end{thm}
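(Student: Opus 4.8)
The plan is to adapt the argument used for the case $\alpha=1$ in Theorem~\ref{BCP1}, the new feature being that the Haar basis of $L^p$ is only $K_u(p)$-unconditional, not $1$-unconditional. Put $X=L^p[0,1]$ and let $q$ be the conjugate exponent, $1/p+1/q=1$. First I would record what is needed. Since $3/2<p<3$ forces $q\in(3/2,3)$ as well, Lemma~\ref{unc} gives that the (normalized) Haar basis $(e_n)_{n=1}^\infty$ is $K_u$-unconditional with
\[
K_u:=K_u(p)=\max(p,q)-1<2 ;
\]
by Lemma~\ref{monotone} it is a monotone Schauder basis, so $\norm{P_{1,K}}\le1$ and the partial sum projections $P_{1,K}=\sum_{n=1}^K e_n^*\otimes e_n$ converge strongly to $id_X$; and $X^*=L^q[0,1]$ is separable. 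Fix a countable dense subset $\mathcal{A}$ of $X^*$.

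The candidate set of ball-covering points will be the countable set
\[
\mathcal{C}=\Bigl\{\ \frac{2\sum_{n=1}^K f_n\otimes e_n}{\norm{\sum_{n=1}^K f_n\otimes e_n}_{B(X)}}\ :\ K\in\N_+,\ f_n\in\mathcal{A}\ (1\le n\le K),\ \sum_{n=1}^K f_n\otimes e_n\neq0\ \Bigr\},
\]
whose members are finite rank operators of norm exactly $2$, and every ball will carry the single fixed radius $\rho:=(K_u+2)/2<2$. Given $T\in B(X)$ with $\norm{T}_{B(X)}=1$, I would first choose $K$: take $x_0\in S_X$ with $\norm{Tx_0}>1-\delta/2$; then $P_{1,K}Tx_0\to Tx_0$ yields $\norm{P_{1,K}T}>1-\delta$ for all large $K$, while $\norm{P_{1,K}T}\le1$ always. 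Next pick $x_{m_n}^*\in\mathcal{A}$ with $\norm{x_{m_n}^*-e_n^*T}\le\varepsilon_2$ for $1\le n\le K$, and set $S=\sum_{n=1}^K x_{m_n}^*\otimes e_n=P_{1,K}T+E$, so $\norm{E}_{B(X)}\le K\varepsilon_2$ and $\norm{S}_{B(X)}\in[\,1-\delta-K\varepsilon_2,\ 1+K\varepsilon_2\,]$, in particular bounded away from $0$.

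The estimate is then the following. Writing $T=P_{1,K}T+(id_X-P_{1,K})T$,
\[
T-\frac{2S}{\norm{S}_{B(X)}}=D\,T-\frac{2}{\norm{S}_{B(X)}}\,E,\qquad D:=\Bigl(1-\frac{2}{\norm{S}_{B(X)}}\Bigr)P_{1,K}+(id_X-P_{1,K}),
\]
and $D$ is a diagonal operator for the Haar basis, $D=\sum_n\lambda_n e_n^*\otimes e_n$, with $|\lambda_n|\le M:=\max\bigl(|1-2/\norm{S}_{B(X)}|,1\bigr)$; hence $\norm{D}_{B(X)}\le K_u M$ by $K_u$-unconditionality. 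Therefore
\[
\Bigl\|\,T-\frac{2S}{\norm{S}_{B(X)}}\,\Bigr\|_{B(X)}\ \le\ K_u M+\frac{2K\varepsilon_2}{\norm{S}_{B(X)}} .
\]
Because $\norm{S}_{B(X)}\in[1-o(1),\,1+o(1)]$ as $\delta,\varepsilon_2\to0$, we have $M\to1$ and the right side tends to $K_u$; choosing $\delta$ small once and for all (uniformly in $T$) and then $\varepsilon_2$ small (depending on $T$ through $K$) makes the left side $<\rho=(K_u+2)/2$. Thus $T\in B\bigl(2S/\norm{S}_{B(X)},\rho\bigr)$ with centre in $\mathcal{C}$. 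Since every centre has norm $2$ and the common radius is $\rho<2$, each such ball is disjoint from $B(0,2-\rho)$, and $2-\rho=(2-K_u)/2>0$ is fixed; this is precisely the UBCP.

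The main obstacle, and the only genuine departure from Theorem~\ref{BCP1}, is the loss of monotonicity: for a $1$-unconditional basis one has $\norm{P_{1,K}T}\nearrow\norm{T}$, which drove the approximation there, whereas here that fails and one has to use strong convergence of the Haar partial sum projections to locate a usable $K$, and --- more importantly --- keep the final distance strictly below the norm $2$ of the centres. The displayed bound achieves this only because $K_uM\to K_u<2$, i.e.\ exactly on the range $3/2<p<3$; at the endpoints $K_u=2$, the terms $K_uM$ and $2-\rho$ no longer leave room and the construction collapses.
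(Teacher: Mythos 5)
Your proposal is correct and follows essentially the same route as the paper's proof: the same countable set of centres $2S/\norm{S}$ built from a dense sequence in $L^q$, the same splitting of $T-2S/\norm{S}$ into a diagonal multiplier controlled by the unconditional constant $K_u=\max(p,q)-1=2-\kappa$ plus an approximation error, and the same uniform radius $(2+K_u)/2=2-\kappa/2<2$ giving the UBCP. The only quibble is your closing remark about "loss of monotonicity": the Haar basis \emph{is} monotone in $L^p[0,1]$ (the paper's Lemma~\ref{monotone}), so $\norm{P_{1,K}T}$ does increase to $\norm{T}$; your strong-convergence argument for locating $K$ is nonetheless valid and in fact supplies the justification the paper states only tersely.
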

\begin{proof}
  Let $(e_n)_{n=1}^\infty$ be the Haar basis of $L^p[0,1]$ with biorthogonal functionals $(e_n^*)_{n=1}^\infty$ (simplified as $\{(e_n,e_n^*)\}_{n=1}^\infty$). Since $(L^p[0,1])^*=L^q[0,1]$ where $p^{-1}+q^{-1}=1$ is separable, let $\mathcal{A}=\{x_n^*\}_{n=1}^\infty$ be the countable dense subset of the unit ball of $L^q[0,1]$. For all $T \in B(L^p[0,1])$ with $\|T\|=1$ and $x \in X$, we have
  \[Tx=\sum_{n=1}^{\infty}e_n^*T\otimes e_n (x).\]
  By Lemma \ref{monotone}, $\{(e_n,e_n^*)\}_{n=1}^\infty$ is monotone, then for all $K=1,2,\cdots$ and for all $x \in X$,
  we have
  \[\norm{\sum_{n=1}^K e^*_n T \otimes e_n (x)} \leq \norm{\sum_{n=1}^{K+1} e^*_n T \otimes e_n (x)} \leq \|T x\|.\]
  Therefore
  \[\norm{\sum_{n=1}^{K} e_n^*T\otimes e_n} \leq \norm{\sum_{n=1}^{K+1} e_n^*T\otimes e_n} \leq \|T\|=1.\]
  Since the monotone bounded series $\left\{\norm{\sum_{n=1}^{K} e^*_nT \otimes e_n}\right\}_{K=1}^\infty$ must have limit and the limit is precisely 1,
  by Lemma \ref{unc}, the unconditional constant of $\{(e_n,e_n^*)\}_{n=1}^\infty$ is $2-\kappa$ for some $$\kappa=3-\max(p,q)>0.$$
  Then for all \[0<\varepsilon_3< \frac{2\kappa}{7-3\kappa},\] let $0<\varepsilon_1<\min(\varepsilon_3/8,1/4)$, then there is a large enough $K$ such that \[1-\varepsilon_1 \leq \norm{\sum_{n=1}^{K}e_n^*T\otimes e_n} \leq 1.\]
  For all $n=1,2,\cdots,K$ and $0<\varepsilon_2<\min(1/4K,\varepsilon_3/16K)$, there exists $x^*_{m_n} \in \{x_i^*\}_{i=1}^\infty$, $m_n\in\N_+$ such that
  $$\norm{x^*_{m_n}-e_n^* T} \leq \varepsilon_2.$$
  So by triangular inequality, we have \[\norm{\sum_{n=1}^{K}e_n^*T\otimes e_n -\sum_{n=1}^{K}x^*_{m_n}\otimes e_n}\leq K\varepsilon_2\] and
  \[1-\varepsilon_1-K\varepsilon_2 \leq\norm{\sum_{n=1}^{K} x_{m_n}^*\otimes e_n} \leq 1+K\varepsilon_2.\]
  We will show that the countable set
  \[\left\{\frac{2\sum_{i=1}^{K} f_i\otimes e_i}{\norm{\sum_{i=1}^{K} f_i\otimes e_i}}: K\in \N_+, f_i\in\mathcal{A}\ (1\leq i\leq K )\right\}\] is a set of UBCP points for $B(L^p[0,1])$ $(3/2 < p < 3)$ and the radius of balls is $2-\kappa/2$.
  
  Actually we have 
  \begin{align}
    &\quad\norm{T - \frac{2\sum_{n=1}^{K} x_{m_n}^*\otimes e_n}{\norm{\sum_{n=1}^{K} x_{m_n}^*\otimes e_n}}}\notag\\
    &=\norm{\sum_{n=1}^K \left(e_n^* T-\frac{2 x_{m_n}^*}{\norm{\sum_{n=1}^{K} x_{m_n}^*\otimes e_n}} \right)\otimes e_n +\sum_{n=K+1}^\infty e_n^* T\otimes e_n }\notag\\
    &\leq \norm{\sum_{n=1}^K \left(\frac{2 e_n^* T}{\norm{\sum_{n=1}^{K} x_{m_n}^*\otimes e_n}} \right)\otimes e_n -\sum_{n=1}^K\left(\frac{2 x_{m_n}^*}{\norm{\sum_{n=1}^{K} x_{m_n}^*\otimes e_n}} \right)\otimes e_n }\notag\\
    &\quad +\norm{\left(1-\frac{2}{\norm{\sum_{n=1}^{K} x_{m_n}^*\otimes e_n}} \right)\sum_{n=1}^K e_n^* T\otimes e_n+\sum_{n=K+1}^\infty e_n^* T\otimes e_n } \notag\\
    & \leq \frac{2K\varepsilon_2}{1-\varepsilon_1-K\varepsilon_2}+(2-\kappa)\max\left(\left|1-\frac{2}{\norm{\sum_{n=1}^{K} x_{m_n}^*\otimes e_n}} \right| ,1\right)\label{eq2}\\
    &\leq \frac{2K\varepsilon_2}{1-\varepsilon_1-K\varepsilon_2}+
    (2-\kappa)\left(\frac{2}{1-\varepsilon_1-K\varepsilon_2}-1\right)\notag\\
    &= 2- \kappa +(2-\kappa)\frac{2\varepsilon_1+2K\varepsilon_2}{1-\varepsilon_1-K\varepsilon_2}+
    \frac{2K\varepsilon_2}{1-\varepsilon_1-K\varepsilon_2}\notag\\
    &\leq 2-\kappa+ \frac{7-3\kappa}{4}\varepsilon_3\notag\\
    &< 2-\frac{\kappa}{2} \notag\\
    &= \norm{\frac{2\sum_{n=1}^{K} x_{m_n}^*\otimes e_n}{\norm{\sum_{n=1}^{K} x_{m_n}^*\otimes e_n}}}-\frac{\kappa}{2}\notag,
  \end{align}
  where inequality (\ref{eq2}) is obtained by unconditional constant $2-\kappa$. This finishes the proof.
\end{proof}

\begin{cor}
Let $X$ be a Banach space and $X^*$ be separable. Let $(e_n)_{n=1}^\infty$ be a basis for $X$ with biorthogonal functionals $(e_n^*)_{n=1}^\infty$ such that $\{(e_n,e_n^*)\}_{n=1}^\infty$ is both monotone and $(2-\varepsilon)$-unconditional for some $\varepsilon>0$. If $1-\varepsilon/2<\alpha\leq1$, then the renormed space $X_{\alpha}=(B(X),\|\cdot\|_\alpha)$ has the BCP.
\end{cor}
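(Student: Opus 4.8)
The plan is to adapt the arguments of Theorem \ref{BCP1} (case $1/2<\alpha<1$) and Theorem \ref{LpBCP} simultaneously, keeping careful track of where the unconditional constant $2-\varepsilon$ enters. Since $X^*$ is separable, fix a countable dense subset $\mathcal{A}=\{x_n^*\}_{n=1}^\infty$ of the unit ball of $X^*$, and propose as centers the same countable family
\[\left\{\frac{2\sum_{i=1}^{K}f_i\otimes e_i}{\norm{\sum_{i=1}^{K}f_i\otimes e_i}_{B(X)}}:K\in\N_+,\ f_i\in\mathcal{A}\ (1\le i\le K)\right\},\]
with radius $2\alpha$. Each such center is finite rank, hence compact, so its $\norm{\cdot}_\alpha$-norm equals $\alpha$ times its $B(X)$-norm, namely $2\alpha$; thus the balls avoid the origin, and it remains to check they cover the unit sphere of $X_\alpha$.

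Fix $T$ with $\norm{T}_\alpha=1$. First I would record the elementary bounds $1\le\norm{T}_{B(X)}\le 1/\alpha$ and $\norm{T}_{B(X)/K(X)}\le 1$, which follow from $\norm{T}_{B(X)/K(X)}\le\norm{T}_{B(X)}$ and $\norm{T}_\alpha=\alpha\norm{T}_{B(X)}+(1-\alpha)\norm{T}_{B(X)/K(X)}$. Since the basis is monotone, $\bigl\{\norm{\sum_{n=1}^{K}e_n^*T\otimes e_n}_{B(X)}\bigr\}_{K}$ increases to $\norm{T}_{B(X)}$, so for a large enough $K$ one can guarantee $\norm{\sum_{n=1}^{K}e_n^*T\otimes e_n}_{B(X)}\ge 1-\varepsilon_1$. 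Approximating each $e_n^*T$ by some $x_{m_n}^*\in\mathcal{A}$ within $\varepsilon_2$ and setting $P=\sum_{n=1}^{K}x_{m_n}^*\otimes e_n$, $P_0=\sum_{n=1}^{K}e_n^*T\otimes e_n$, and $\lambda=2/\norm{P}_{B(X)}$, one gets $\norm{P}_{B(X)}\ge 1-\varepsilon_1-K\varepsilon_2$ and $\norm{P_0-P}_{B(X)}\le K\varepsilon_2$.

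The core estimate is on $\norm{T-\lambda P}_\alpha=\alpha\norm{T-\lambda P}_{B(X)}+(1-\alpha)\norm{T-\lambda P}_{B(X)/K(X)}$. Since $\lambda P$ is finite rank, $\norm{T-\lambda P}_{B(X)/K(X)}=\norm{T}_{B(X)/K(X)}$. For the $B(X)$ part, write $T-\lambda P=(T-\lambda P_0)+\lambda(P_0-P)$; the second summand has norm $\le\lambda K\varepsilon_2$, and for the first I would apply $(2-\varepsilon)$-unconditionality coefficientwise to $(T-\lambda P_0)x$ versus $Tx$ (using monotonicity to pass from finite partial sums to the full series), which gives
\[\norm{T-\lambda P_0}_{B(X)}\le(2-\varepsilon)\max\!\bigl(|1-\lambda|,1\bigr)\norm{T}_{B(X)}\le(2-\varepsilon)\Bigl(\tfrac{2}{1-\varepsilon_1-K\varepsilon_2}-1\Bigr)\norm{T}_{B(X)}.\]
Combining, and using $\alpha\norm{T}_{B(X)}\le 1$ and $2-\varepsilon\le 2$ to absorb the terms carrying $\varepsilon_1,\varepsilon_2$, the bound reduces, up to an error that tends to $0$ with $\varepsilon_1,\varepsilon_2$, to $\alpha(2-\varepsilon)\norm{T}_{B(X)}+(1-\alpha)\norm{T}_{B(X)/K(X)}$. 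The identity $\alpha(2-\varepsilon)\norm{T}_{B(X)}+(1-\alpha)\norm{T}_{B(X)/K(X)}=1+(1-\varepsilon)\alpha\norm{T}_{B(X)}$ together with $\alpha\norm{T}_{B(X)}\le 1$ and $\varepsilon\le 1$ bounds this by $2-\varepsilon$, which is strictly below $2\alpha$ exactly because $\alpha>1-\varepsilon/2$. Setting $\delta_0=2\alpha-(2-\varepsilon)>0$ and choosing $\varepsilon_1,\varepsilon_2$ small enough (first $\varepsilon_1$ depending on $\delta_0$, then $\varepsilon_2$ depending on the resulting $K$ and on $\delta_0$) makes the error smaller than $\delta_0$, so $\norm{T-\lambda P}_\alpha<2\alpha$ and the family covers $S_{X_\alpha}$. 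The case $\alpha=1$ is the identical argument with the $B(X)/K(X)$ term simply dropped.

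The step I expect to require the most care is the unconditionality estimate together with the bookkeeping of $\varepsilon_1,\varepsilon_2$: one must be certain that the constant $2-\varepsilon$, rather than the $2$ that a naive triangle-inequality split would produce, genuinely multiplies $\norm{T}_{B(X)}$, since the whole threshold $\alpha>1-\varepsilon/2$ depends on this, and the extremal configuration $\norm{T}_{B(X)}=1/\alpha$, $\norm{T}_{B(X)/K(X)}=0$ (i.e. a scaled compact operator) is precisely where the inequality is tight.
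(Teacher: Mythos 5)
Your proposal is correct and follows essentially the same route as the paper: the same countable family of normalized finite-rank centers $2\sum_{i\le K} f_i\otimes e_i/\norm{\sum_{i\le K} f_i\otimes e_i}_{B(X)}$ with radius $2\alpha$, the same decomposition $T-\lambda P=(1-\lambda)P_0+(T-P_0)+\lambda(P_0-P)$ estimated via the $(2-\varepsilon)$-unconditional constant, and the same final identity giving the bound $1+(1-\varepsilon)\alpha\norm{T}_{B(X)}\le 2-\varepsilon<2\alpha$. The only differences are cosmetic (you keep the error bookkeeping implicit via $\delta_0$, while the paper fixes explicit $\varepsilon_1,\varepsilon_2,\varepsilon_3$), so no further changes are needed.
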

\begin{proof}
We first show that if $\alpha=1$ then $X_1=B(X)$ has the BCP. Since $X^*$ is separable, we can let $\mathcal{A}=\{x_n^*\}_{n=1}^\infty$ be the countable dense subset of the unit ball of $X^\ast$. Since $\{(e_n,e_n^*)\}_{n=1}^\infty$ is a basis of $X$, for all $T \in B(X)$ with $\|T\|=1$ and for all $x \in X$, we have
  \[Tx=\sum_{n=1}^{\infty}e_n^*T\otimes e_n (x).\]
Since the basis $\{(e_n,e_n^*)\}_{n=1}^\infty$ is monotone, then for all $K=1,2,\cdots$, we have
  \[\norm{\sum_{n=1}^{K} e_n^*T\otimes e_n} \leq \norm{\sum_{n=1}^{K+1} e_n^*T\otimes e_n} \leq \|T\|=1.\]
  Note that the monotone bounded series $\left\{\norm{\sum_{n=1}^{K} e^*_nT \otimes e_n}\right\}_{K=1}^\infty$ must have limit and the limit is precisely 1. Since the unconditional constant of $\{(e_n,e_n^*)\}_{n=1}^\infty$ is $2-\varepsilon$, then for all $0<\varepsilon_3< 2\varepsilon/ (7-3\varepsilon)$, let $0<\varepsilon_1<\min(\varepsilon_3/8,1/4)$, there is a large enough $K$ such that \[\|T\|-\varepsilon_1=1-\varepsilon_1 \leq \norm{\sum_{n=1}^{K}e_n^*T\otimes e_n} \leq 1=\|T\|.\]
  For all $n=1,2,\cdots,K$ and $0<\varepsilon_2<\min(1/4K,\varepsilon_3/16K)$, there exists $x^*_{m_n} \in \{x_i^*\}_{i=1}^\infty$, $m_n\in\N_+$ such that $\norm{x^*_{m_n}-e_n^* T} \leq \varepsilon_2$.
  Since $\{(e_n,e_n^*)\}_{n=1}^\infty$ is $(2-\varepsilon)$-unconditional for some $\varepsilon>0$, by the proof of  Theorem \ref{LpBCP}, we obtain that the countable set
  \[\left\{\frac{2\sum_{i=1}^{K} f_i\otimes e_i}{\norm{\sum_{i=1}^{K} f_i\otimes e_i}}: K\in \N_+, f_i\in\mathcal{A}\ (1\leq i\leq K)\right\}\] is a set of BCP points for $B(X)$ and the radius of balls is $2-\varepsilon/2$.

  Now we assume $1-\varepsilon/2<\alpha < 1$. For all $T \in X_\alpha$ with $\|T\|_\alpha=1$, we have $\|T\|_{B(X)}\geq 1$ and $\|T\|_{B(X)/K(X)} \leq 1$. For all \[0<\varepsilon_3<\frac{2\alpha-2+\varepsilon}{5},\] let $\varepsilon_1, \varepsilon_2, K$ and $x_{m_n}^*$ be chosen the same as before, then we have
  \begin{align*}
    &\quad\norm{T - \frac{2\sum_{n=1}^{K} x_{m_n}^*\otimes e_n}{\norm{\sum_{n=1}^{K} x_{m_n}^*\otimes e_n}_{B(X)}}}_\alpha\\
    &\leq \frac{2\alpha K\varepsilon_2}{1-\varepsilon_1-K\varepsilon_2}+\alpha(2-\varepsilon)
    \left(\frac{1+\varepsilon_1+K\varepsilon_2}{1-\varepsilon_1-K\varepsilon_2}\right)\norm{T}_{B(X)}+(1-\alpha)\norm{T}_{B(X)/K(X)}\\
    &=1+\frac{2\alpha K\varepsilon_2}{1-\varepsilon_1-K\varepsilon_2}+\alpha(2-\varepsilon)
    \left(\frac{1+\varepsilon_1+K\varepsilon_2}{1-\varepsilon_1-K\varepsilon_2}\right)\norm{T}_{B(X)}-\alpha\|T\|_{B(X)}\\
    &=1+\frac{2\alpha K\varepsilon_2}{1-\varepsilon_1-K\varepsilon_2}+\alpha(1-\varepsilon)\|T\|_{B(X)}+\alpha(2-\varepsilon)\frac{2\varepsilon_1+2K\varepsilon_2}{1-\varepsilon_1-K\varepsilon_{2}}\|T\|_{B(X)}\\
    &\leq 1+\frac{2\alpha K\varepsilon_2}{1-\varepsilon_1-K\varepsilon_2}+(1-\varepsilon)+(2-\varepsilon)\frac{2\varepsilon_1+2K\varepsilon_2}{1-\varepsilon_1-K\varepsilon_{2}}\\
    &\leq 1+(1-\varepsilon)+5\varepsilon_3\\
    &<2\alpha \\
    &= \norm{\frac{2\sum_{n=1}^{K} x_{m_n}^*\otimes e_n}{\norm{\sum_{n=1}^{K} x_{m_n}^*\otimes e_n}}_{B(X)}}_\alpha.
  \end{align*}
  This shows that the countable set \[\left\{\frac{2\sum_{i=1}^{K} f_i\otimes e_i}{\norm{\sum_{i=1}^{K} f_i\otimes e_i}_{B(X)}}: K\in \N_+, f_i\in \mathcal{A}\ (1\leq i\leq K)\right\}\] is a set of BCP points for $X_\alpha$ $(1- \varepsilon/2<\alpha\leq 1)$ and the radius of balls is $2\alpha$.
  \end{proof}

Then we will explain that for any unconditional basis of $L^p[0,1]$ and if it is monotone additionally, then the inequality (\ref{eq2}) is almost sharp. That is, the Haar basis is almost the best choice in the proof of Theorem \ref{LpBCP}.

\begin{defn}[\cite{AK2011,C1992,M1998}]
    Let $(e_n)_{n=1}^\infty$ be an unconditional basis of a Banach space $X$, then the suppression unconditional constant $K_{su}$ is the smallest real number such that for all  $J \subseteq \N$ the following inequality holds \[ \norm{\sum_{n \in J} e_n^*(x) e_n} \leq K_{su} \norm{x}.\]
\end{defn}

\begin{lem}[\cite{C1992}]\label{unc2}
 Let $1<p<\infty$ and $p^*:=\max(p,q)$, then for all $p^*>p_0$ where $p_0 \approx 2.5455458$ is the unique solution to
 \[p-2 = \left(\frac{(p-1)(p-2)}{-p^2+5p-5}\right)^{p-1},\]
 the suppression unconditional constant $K_{su}(p)$ is accurately
 \[K_{su}(p)=\frac{p^*}{2}+\frac{1}{2}\ln\left(\frac{1+e^{-2}}{2}\right)+\frac{\alpha_2}{p^*} +\cdots <\frac{p^*}{2},\]
 where $$\alpha_2=2^{-1}\ln \left(\frac{1+e^{-2}}{2}\right) + \left(2^{-1}\ln \left(\frac{1+e^{-2}}{2}\right)\right)^2 -2\left(\frac{e^{-2}}{1+e^{-2}}\right)^2.$$
 And if $p^* \leq p_0$, then $K_{su}(p) \leq p^*/2$.
\end{lem}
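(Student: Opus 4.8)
The plan is to recast the statement as a sharp norm bound for a martingale transform and then run Burkholder's Bellman--function machinery. On $[0,1]$ with the dyadic filtration $(\mathcal F_n)_{n\ge0}$, the Haar system is a martingale difference sequence, so a Haar expansion $f=\sum_n a_ne_n$ is the terminal value of a dyadic martingale $(f_n)$ with differences $d_n=a_ne_n$, and the suppression projection onto $J\subseteq\N$ sends $f$ to $g=\sum_{n\in J}a_ne_n$, the terminal value of the transform $g_n=\sum_{k\le n}v_kd_k$ with $\mathcal F_{k-1}$-measurable multipliers $v_k=\mathbf{1}_{[k\in J]}\in\{0,1\}$. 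By homogeneity, by duality of the Haar basis under the $L^p$--$L^q$ pairing (which is exactly what forces the answer to depend only on $p^{*}=\max(p,q)$), and by a routine truncation, $K_{su}(p)$ equals the least constant $C$ with $\norm{g}_p\le C\norm{f}_p$ over all such pairs $(f,g)$. The structural point I would exploit is that $g$ and $f-g$ here have \emph{disjointly supported} increments, which is strictly stronger than differential subordination; Burkholder's theorem alone would only give the bound $p^{*}-1$, whereas the truth is of order $p^{*}/2$, so the extra disjointness has to be encoded into the estimate.

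First I would build the Bellman function: seek the smallest $C$ for which there exists $U\colon\mathbb R^2\to\mathbb R$ with the majorization $U(x,y)\ge|y|^p-C^p|x|^p$, the initial bounds $U(x,x)\le0$ and $U(x,0)\le0$ (according as the leading Haar coefficient is kept or dropped), and concavity of $U$ along the two admissible step directions $(1,0)$ and $(1,1)$. For any such $U$ the chain
\[ \mathbb E|g|^p-C^p\,\mathbb E|f|^p \le \mathbb E\,U(f,g) \le U(f_0,g_0) \le 0, \]
where the middle inequality uses concavity in directions $(1,0)$, $(1,1)$ together with the equal-weight, mean-zero two-point nature of each dyadic martingale increment, and the last uses the initial bounds, gives $\norm{g}_p\le C\norm{f}_p$; conversely the least admissible $C$ is attained and the optimal $U$ is the genuine Bellman function, whose affine pieces trace out the near-extremal martingale pairs showing the constant is best possible.

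The decisive and most delicate step is to produce this optimal $U$ in closed form and read off $K_{su}(p)$. Restricting to $x,y\ge0$, I would look for $U$ that coincides with the obstacle $|y|^p-C^p|x|^p$ outside a lens-shaped region and, inside it, is affine along a one-parameter \emph{foliation} by line segments; requiring $U$ to be $C^1$ across the free boundary while concavity along $(1,0)$ and $(1,1)$ holds with equality along the foliation forces each segment to be tangent to the boundary curve and collapses the construction to a first-order ODE for the foliation. Solving that ODE and imposing the matching conditions determines $C=K_{su}(p)$ as the root of a transcendental equation; expanding that equation in powers of $1/p^{*}$ as $p^{*}\to\infty$ yields $K_{su}(p)=\tfrac{p^{*}}{2}+\tfrac12\ln\!\bigl(\tfrac{1+e^{-2}}{2}\bigr)+\tfrac{\alpha_2}{p^{*}}+\cdots$, with the $e^{-2}$ and the coefficient $\alpha_2$ coming out of an elementary integral evaluated in the large-$p^{*}$ limit. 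The threshold $p_0\approx2.5455458$ is precisely where this geometry degenerates: for $p^{*}\le p_0$ the candidate segments no longer fit inside the lens, the extremal configuration changes type, and one is left with only the cruder bound $K_{su}(p)\le p^{*}/2$. I expect the hard part to be exactly here --- there is no algorithm for guessing the closed form of $U$ (it must be reverse-engineered from where the Monge--Amp\`ere operator of the candidate degenerates against the obstacle), and both the global verification of its concavity and the asymptotic expansion carried out to the order of $\alpha_2$ are genuinely technical computations.
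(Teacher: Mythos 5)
This lemma is not proved in the paper at all: it is quoted from Choi's 1992 paper \cite{C1992}, whose argument is exactly the Burkholder special-function (Bellman) method for transforms of dyadic martingales by predictable multipliers with values in $\{0,1\}$. Your outline is therefore faithful to the strategy of the cited source: the identification of the suppression projection with a $\{0,1\}$-transform of the Haar martingale, the $p\leftrightarrow q$ duality that makes the answer a function of $p^{*}$ alone, and the search for a majorant $U$ of the obstacle $|y|^{p}-C^{p}|x|^{p}$ with $U(x,x)\le 0$, $U(x,0)\le 0$ and midpoint concavity along the step directions $(1,0)$ and $(1,1)$ are all correct and are the right skeleton.

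The difficulty is that, as written, the skeleton is all there is, and the content of the lemma lives precisely in the parts you defer. The closed form of $U$, the verification of its zigzag concavity, the transcendental equation whose root is $K_{su}(p)$, the large-$p^{*}$ expansion producing $\tfrac12\ln\bigl(\tfrac{1+e^{-2}}{2}\bigr)$ and $\alpha_{2}$, and the identification of $p_{0}\approx 2.5455458$ as the point where the extremal configuration changes type are exactly what the lemma asserts; ``solve a first-order ODE for the foliation and impose matching conditions'' gives the reader no way to check that $e^{-2}$ or $p_{0}$ is correct, so this must be regarded as a road map rather than a proof. Two smaller points. First, your remark that ``Burkholder's theorem alone would only give $p^{*}-1$'' undersells what is free: writing $v_{k}=\tfrac12(1+\epsilon_{k})$ with $\epsilon_{k}\in\{-1,1\}$ gives $g=\tfrac12 f+\tfrac12 h$ with $h$ a $\pm1$-transform, whence $\|g\|_{p}\le\tfrac12\bigl(1+(p^{*}-1)\bigr)\|f\|_{p}=\tfrac{p^{*}}{2}\|f\|_{p}$; this already yields the final clause of the lemma ($K_{su}(p)\le p^{*}/2$ for all $p$, in particular for $p^{*}\le p_{0}$), and Choi's contribution is the strict improvement and exact value for $p^{*}>p_{0}$. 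Second, the word ``accurately'' requires the lower bound as well: the near-extremizers produced by the Bellman construction are adapted martingale pairs, and transplanting them into genuine Haar suppression projections (deterministic index sets $J$) is a separate, standard but not automatic, approximation step that your ``routine truncation'' does not cover.
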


An unconditional basis is associated with its basis constant, unconditional basis constant and suppression unconditional constant, and it is worthwhile to know the relationship between these numbers. The following lemma give an order of the three important constants.

\begin{lem}[\cite{M1998}]
     Let $(e_n)_{n=1}^\infty$ be an unconditional basis of a Banach space $X$, $K_b$ be the basis constant, $K_u$ be the unconditional constant and $K_{su}$ be the unconditional suppression constant, then \[1\leq K_{b} \leq K_{su} \leq \frac{1+K_u}{2} \leq K_u \leq 2K_{su}.\]
\end{lem}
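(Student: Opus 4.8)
The plan is to establish the chain $1\le K_b\le K_{su}\le \frac{1+K_u}{2}\le K_u\le 2K_{su}$ link by link, the main device being the diagonal multiplier operators attached to the basis. For a bounded scalar sequence $\theta=(\theta_n)_{n=1}^\infty$ with $\sup_n|\theta_n|\le 1$, unconditionality of $(e_n)_{n=1}^\infty$ guarantees that $M_\theta\bigl(\sum_n a_n e_n\bigr):=\sum_n \theta_n a_n e_n$ defines a bounded operator on $X$; in particular, for $J\subseteq\N$ the suppression projection is $P_J=M_{\mathbf{1}_J}$. In this language $K_b=\sup_{N}\norm{P_{\{1,\dots,N\}}}$, $K_{su}=\sup_{J\subseteq\N}\norm{P_J}$, and --- as the last step will show --- $K_u=\sup_{\varepsilon}\norm{M_\varepsilon}$, where $\varepsilon$ ranges over all $\{-1,1\}$-valued sequences.

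Two of the links are immediate. Taking $N=M$ in the characterization of $K_b$ and $a_n=b_n$ in the definition of $K_u$ gives $K_b\ge 1$ and $K_u\ge 1$, so $1\le K_b$ and $\frac{1+K_u}{2}\le K_u$. Moreover each partial-sum projection $P_{\{1,\dots,N\}}$ is a particular suppression projection, hence $\norm{P_{\{1,\dots,N\}}}\le K_{su}$ for every $N$ and therefore $K_b\le K_{su}$.

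For $K_{su}\le\frac{1+K_u}{2}$, fix $J\subseteq\N$ and set $\varepsilon_n=1$ for $n\in J$ and $\varepsilon_n=-1$ for $n\notin J$; then $P_J=\frac{1}{2}\bigl(\mathrm{id}_X+M_\varepsilon\bigr)$. Since $|\varepsilon_n a_n|=|a_n|$ for every sequence $(a_n)$, the defining inequality of $K_u$ yields $\norm{M_\varepsilon}\le K_u$, whence $\norm{P_J}\le\frac{1}{2}(1+K_u)$; taking the supremum over $J$ gives the claim. For the last link $K_u\le 2K_{su}$, given a sign sequence $\varepsilon$ write $J^{\pm}=\{n:\varepsilon_n=\pm 1\}$, so that $M_\varepsilon=P_{J^{+}}-P_{J^{-}}$ and hence $\norm{M_\varepsilon}\le\norm{P_{J^{+}}}+\norm{P_{J^{-}}}\le 2K_{su}$; combined with the identity $K_u=\sup_\varepsilon\norm{M_\varepsilon}$ this gives $K_u\le 2K_{su}$.

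The only step that requires genuine work --- and the one I would treat as the \emph{crux} --- is that identity $K_u=\sup_\varepsilon\norm{M_\varepsilon}$. The inequality ``$\ge$'' is exactly the bound $\norm{M_\varepsilon}\le K_u$ used above. For ``$\le$'', fix a finitely supported vector $x=\sum_{n=1}^{N}a_n e_n$; the set of all $\sum_{n=1}^{N}b_n e_n$ with $|b_n|\le|a_n|$ equals $\{M_\theta x:\ |\theta_n|\le 1\}$, and $\theta\mapsto\norm{M_\theta x}$ is a convex function on the cube $[-1,1]^N$ (a norm composed with a linear map), so it attains its maximum at a vertex $\varepsilon\in\{-1,1\}^N$. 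Hence $\sup\{\norm{\sum b_n e_n}:|b_n|\le|a_n|\}=\max_\varepsilon\norm{M_\varepsilon x}\le\bigl(\sup_\varepsilon\norm{M_\varepsilon}\bigr)\norm{x}$, and dividing by $\norm{x}$ and taking the supremum over all such $x$ gives $K_u\le\sup_\varepsilon\norm{M_\varepsilon}$. (In the complex-scalar case one replaces the cube by the polydisc and the vertex argument by the maximum principle for the plurisubharmonic function $\theta\mapsto\norm{M_\theta x}$, reducing to $|\theta_n|=1$; see \cite{M1998}.) The remaining estimates are only triangle-inequality bookkeeping, so no further obstacle is expected.
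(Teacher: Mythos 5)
Your proof is correct and is essentially the standard argument from the cited reference [M1998]: the paper itself gives no proof of this lemma, simply quoting it from Megginson, and your chain of estimates via the multiplier operators $M_\theta$, the identities $P_J=\tfrac12(\mathrm{id}_X+M_\varepsilon)$ and $M_\varepsilon=P_{J^+}-P_{J^-}$, and the convexity/maximum-modulus argument identifying $K_u$ with $\sup_\varepsilon\norm{M_\varepsilon}$ is exactly the textbook route. Nothing further is needed.
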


Then we can consider the lower bound of the unconditional constant of any monotone unconditional basis of $L^p[0,1]$ and by Lemma \ref{unc}, we know that the  Haar basis has almost the smallest unconditional constant.

\begin{cor}
    Let $(e_n)_{n=1}^\infty$ be a monotone unconditional basis of $L^p[0,1]$ for any $1<p <\infty$, then the unconditional constant $K_u(p)$ satisfies
    \[ K_u(p) \geq \max\left(p,\frac{p}{p-1}\right)-1+\varepsilon(p),\]
    where $$\varepsilon(p)=\chi_{(p_0,+\infty)}(p)\left(\ln\left(\frac{1+e^{-2}}{2}\right)+\frac{2\alpha_2}{\max\left(p,p/(p-1)\right)} +\cdots \right)\leq 0.$$
\end{cor}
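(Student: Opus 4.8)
The plan is to derive the estimate from the ordering of the three constants $1\le K_b\le K_{su}\le \frac{1+K_u}{2}\le K_u\le 2K_{su}$ recorded above, together with the sharp computation of the suppression unconditional constant in Lemma \ref{unc2}. Write $K_{su}$ for the suppression unconditional constant of the given monotone unconditional basis $(e_n)_{n=1}^\infty$ of $L^p[0,1]$, and set $p^*=\max(p,q)$ where $1/p+1/q=1$. From $K_{su}\le \frac{1+K_u}{2}$ one obtains at once
\[ K_u(p)\ge 2K_{su}-1, \]
so everything reduces to a lower bound for $K_{su}$. This is exactly where monotonicity is used: by the result of \cite{C1992} underlying Lemma \ref{unc2}, the Haar basis (asymptotically) minimizes the suppression unconditional constant among monotone unconditional bases of $L^p[0,1]$, so $K_{su}\ge K_{su}(p)$, with $K_{su}(p)$ the value of Lemma \ref{unc2}.

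Granting $K_{su}\ge K_{su}(p)$, I would finish by substituting the value of $K_{su}(p)$ into $K_u(p)\ge 2K_{su}(p)-1$, distinguishing the two regimes of Lemma \ref{unc2}. If $p^*\le p_0$ one uses the equality $K_{su}(p)=p^*/2$ there, which yields $K_u(p)\ge p^*-1=\max(p,q)-1$, i.e. the asserted bound with $\varepsilon(p)=0$. If $p^*>p_0$ one inserts the expansion $K_{su}(p)=\frac{p^*}{2}+\frac12\ln(\frac{1+e^{-2}}{2})+\frac{\alpha_2}{p^*}+\cdots$; doubling and subtracting $1$ produces the leading term $p^*-1=\max(p,q)-1$ and bundles the rest into $\varepsilon(p)=\ln(\frac{1+e^{-2}}{2})+\frac{2\alpha_2}{p^*}+\cdots$, which is precisely the quantity in the statement (the indicator $\chi_{(p_0,+\infty)}$ encoding whether $p^*>p_0$). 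Since Lemma \ref{unc2} also gives $K_{su}(p)<p^*/2$ in this regime, $\varepsilon(p)<0$ there, hence $\varepsilon(p)\le 0$ throughout; comparing with Lemma \ref{unc}, where the Haar basis attains $K_u=\max(p,q)-1$, then exhibits the claimed near-optimality of the Haar choice in inequality (\ref{eq2}) of Theorem \ref{LpBCP}.

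The computational part in the second paragraph is routine bookkeeping. The genuine content, which I would cite rather than reprove, is the extremality result of \cite{C1992}: no monotone unconditional basis of $L^p[0,1]$ can have suppression unconditional constant appreciably smaller than that of the Haar system. That is the analytically hard input and the only real obstacle; on our side the one point requiring attention is to use the full equality $K_{su}(p)=p^*/2$ for $p^*\le p_0$ (not merely the inequality $K_{su}(p)\le p^*/2$ that Lemma \ref{unc2} records for the proof of Theorem \ref{LpBCP}), and to carry the indicator function so that a single expression for $\varepsilon(p)$ serves both regimes.
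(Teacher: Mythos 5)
The paper states this corollary with no written proof, and the intended derivation is exactly your chain: $K_u\ge 2K_{su}-1$ from the ordering lemma $K_{su}\le\frac{1+K_u}{2}$, combined with Choi's lower bound for the suppression unconditional constant of a monotone basis from Lemma \ref{unc2}, followed by the substitution of the asymptotic expansion; so your proposal is correct and follows the same route. Your remark that the regime $p^*\le p_0$ actually needs the lower bound $K_{su}\ge p^*/2$ (whereas Lemma \ref{unc2} as stated records only the upper bound $K_{su}(p)\le p^*/2$ there) identifies a real imprecision, but it is one the paper itself glosses over rather than a defect of your argument.
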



\subsection*{Acknowledgment}
The authors are very grateful to Lixin Cheng for inspiring suggestions on ball-covering property and his invitation to visit Xiamen University. The authors would like to express their gratitude for visiting Institute for Advanced Study in Mathematics of HIT in the summer workshops of 2018, 2019, 2022 and 2023. The authors would like to thank  Minzeng Liu for helpful discussions. The authors would also like to express their appreciation to the referees for carefully reading the manuscript and providing many helpful comments and suggestions that helped improve the representation of this paper.


\begin{thebibliography}{40}
\bibitem{AK2011} Albiac, F., Kalton, N.J.: Topics in Banach Space Theory. Graduate Texts in Mathematics, vol. 233. Springer, Cham (2011)
    
\bibitem{AH2011} Argyros, S.A., Haydon, R.G.: A hereditarily indecomposable $\mathscr{L}_{\infty}$-space that solves the scalar-plus-compact problem. Acta Math. {\bf 206}, 1--54 (2011)

\bibitem{AG2023} Avil\'{e}s, A., Mart\'{\i}nez-Cervantes, G., Rueda Zocz, A.: A renorming characterisation of Banach spaces containing $\ell_{1}(k)$. Publ. Math. {\bf 67}, 601--609 (2023)

\bibitem{C2006} Cheng, L.: Ball-covering property of Banach spaces. Israel J. Math. {\bf 156}, 111--23 (2006)

\bibitem{CCL2008} Cheng, L., Cheng, Q., Liu, X.: Ball-covering property of Banach spaces that is not preserved under linear isomorphisms. Sci. China Ser. A (2008)

\bibitem{CKWZ2010} Cheng, L., Kadets, V., Wang, B., Zhang, W.: A note on ball-covering property of Banach spaces. J. Math. Anal. Appl. {\bf 371}(1), 249--253 (2010)

\bibitem{CKZ2020} Cheng, L., Kato, M., Zhang, W.: A survey of ball-covering property of Banach spaces. In: The mathematical legacy of Victor Lomonosov-operator theory, pp. 67--84. Adv. Anal. Geom. De Gruyter, Berlin (2020)

\bibitem{CLL2010} Cheng, L., Luo, Z., Liu, X., Zhang, W.: Several remarks on ball-coverings of normed spaces. Acta Math. Sin. {\bf 26}(9), 1667--1672 (2010)

\bibitem{CSZ2009} Cheng, L., Shi, H., Zhang, W.: Every Banach spaces with a $w^*$-separable dual has a $1+\varepsilon$-equivalent norm with the ball covering property. Sci. China Ser. A {\bf 52}, 1869--1874 (2009)

\bibitem{CWZ2011} Cheng, L., Wang, B., Zhang, W., Zhou, Y.: Some geometric and topological properties of Banach spaces via ball coverings. J. Math. Anal. Appl. {\bf 377}, 874--880 (2011)

\bibitem{C1992} Choi, K.P.: A sharp inequality for martingale transforms and the unconditional basis constant of a monotone basis. Trans. Amer. Math. Soc. (1992)

\bibitem{CLL2023} Ciaci, S., Langemets, J., Lissitsin, A.: A characterization of Banach spaces containing $\ell_{1}(k)$ via ball-covering properties. Israel J. Math. {\bf 253}, 359--379 (2023)

\bibitem{F2019} Farah, I.: Combinatorial Set Theory of C*-algebras. Cham: Springer (2019)

\bibitem{FZ2004} Fonf, V.P., Zanco, C.: Almost flat locally finite coverings of the
sphere. Positivity {\bf 8}(3), 269--281 (2004)

\bibitem{FZ12009} Fonf, V.P., Zanco, C.: Finitely locally finite coverings of Banach spaces. J. Math. Anal. Appl. {\bf 350}(2), 640--650 (2009)

\bibitem{FZ22009} Fonf, V.P., Zanco, C.: Coverings of Banach spaces: beyond the
Corson theorem. Forum Math. {\bf 21}(3), 533--546 (2009)

\bibitem{FZ32009} Fonf, V.P., Zanco, C.: Covering spheres of Banach spaces by balls.  Math. Ann. {\bf 344}, 939--945 (2009)

\bibitem{FR2016} Fonf, V.P., Rubin, M.: A reconstruction theorem for locally convex metrizable spaces, homeomorphism groups without small sets, semigroups of non-shrinking functions of a normed space. Topology Appl. {\bf 210}(1), 97--132 (2016)


\bibitem{HKL2023} Huang, J., Kudaybergenov, K., Liu, R.: The ball-covering property of noncommutative symmetric spaces. Studia Math. {\bf 277}(2), 169--190 (2024)

\bibitem{LT1977} Lindenstrauss, J., Tzafriri, L.: Classical Banach Spaces I: Sequence Spaces. Springer Science \& Business Media (1977)

\bibitem{LLLZ2022} Liu, M., Liu, R., Lu, J., Zheng, B.: Ball covering property from commutative function spaces to non-commutative spaces of operators. J. Funct. Anal.  {\bf 283}(1), 109502 (2022)

\bibitem{LLW2017} Luo, Z., Liu, J., Wang, B.: A remark on the ball-covering property of product spaces. Filomat {\bf 31}, 3905--3908 (2017)

\bibitem{LZ2020} Luo, Z., Zheng, B.: Stability of the ball-covering property. Studia Math. {\bf 250}, 19--34 (2020)

\bibitem{LZ2021} Luo, Z., Zheng, B.: The strong and uniform ball covering properties. J. Math. Anal. Appl. {\bf 499}, 125034 (2021)

\bibitem{M1998} Megginson, R.E.: An Introduction to Banach Space Theory. Graduate Texts in Mathematics, vol. 183. Springer, Cham (1998)

\bibitem{S2021} Shang, S.: The ball-covering property on dual spaces and Banach sequence spaces. Acta Math. Sci. Ser. B {\bf 41}(2), 461--474 (2021)

\bibitem{SC2013} Shang, S., Cui, Y.: Ball-covering property in uniformly non-$l_{3}^{(1)}$ Banach spaces and application. Abstr. Appl. Anal. {\bf 2013}(1), 1--7 (2013)

\bibitem{SC2015} Shang, S., Cui, Y.: Locally $2$-uniform convexity and ball-covering property in Banach space. Banach J. Math. Anal. {\bf 9}, 42--53 (2015)

\bibitem{SC2018} Shang, S., Cui, Y.: Dentable point and ball-covering property in Banach spaces. J. Convex Anal. {\bf 25}, 1045--1058 (2018)
    
\bibitem{T2012} Tarbard, M.: Hereditarily indecomposable, separable $\mathscr{L}_{\infty}$ Banach spaces with $\ell_1$ dual having few but not very few operators. J. London Math. Soc. {\bf 85}(3), 737--764 (2012)

\bibitem{Z2012} Zhang, W.: Characterizations of universal ﬁnite representability and B-convexity of Banach spaces via ball coverings. Acta Math. Sin. {\bf 28}, 1369--1374 (2012)
\end{thebibliography}
\end{document}